
\documentclass[a4paper,12pt]{amsart}

\usepackage[ansinew]{inputenc}
\usepackage[english]{babel} 
\usepackage[T1]{fontenc}
\usepackage{csquotes}
\usepackage{amsmath,amssymb,amsthm, mathtools, bbm, ifxetex, amsrefs}
\usepackage{trfsigns}
\usepackage{xspace}
\usepackage{geometry}
\usepackage{comment}
\usepackage{enumerate}
\usepackage{hyperref}

\clubpenalty10000
\widowpenalty10000
\displaywidowpenalty=10000
\geometry{a4paper,left=30mm,right=30mm, top=3cm, bottom=3cm}
\numberwithin{equation}{section}

\newtheorem{satz}{Theorem}[section]
\newtheorem{proposition}[satz]{Proposition}
\newtheorem{lemma}[satz]{Lemma}
\newtheorem{korollar}[satz]{Corollary}

\theoremstyle{definition}

\title{Higher Order Concentration in presence of Poincaré-type inequalities}

\author{Friedrich G\"{o}tze}
\address{Friedrich G\"{o}tze, Faculty of Mathematics, Bielefeld University, Bielefeld, Germany}
\email{goetze@math.uni-bielefeld.de}
\author{Holger Sambale}
\address{Holger Sambale, Faculty of Mathematics, Bielefeld University, Bielefeld, Germany}
\email{hsambale@math.uni-bielefeld.de}

\begin{document}

\subjclass{Primary 60E15, 60F10; secondary 60B20}
\keywords{Concentration of measure phenomenon, Poincaré inequalities}
\thanks{This research was supported by CRC 1283.}
\begin{abstract}
	We show sharpened forms of the concentration of measure phenomenon typically centered at stochastic expansions of order $d-1$ for any $d \in \mathbb{N}$.
	Here we focus on differentiable functions on the Euclidean space in presence
	of a Poincaré-type inequality. The bounds are based on $d$-th order derivatives.
\end{abstract}
\date{\today}

\maketitle

\section{Introduction}

In this note, we study higher order versions of the concentration of 
measure phenomenon. Instead of the
classical problem of deviations of $f$ around the mean 
$\mathbb{E}f$, we study potentially smaller fluctuations of 
$\tilde{f}_d := f - \mathbb{E}f - f_1 - \ldots - f_d$, where $f_1, \ldots, f_d$ are 
``lower order terms'' of $f$ with respect to a suitable decomposition,
such as a Taylor-type decomposition of $f$. In order to study the
concentration of $\tilde{f}_d$ around $0$, which we call higher order
concentration of measure, we use derivatives up to order $d$.

 
Previous work includes R. Adamczak and 
P. Wolff \cite{A-W}, who exploited certain Sobolev-type inequalities or subgaussian
tail conditions to derive exponential tail inequalities for
functions with bounded higher-order derivatives (evaluated in terms of some 
tensor-product matrix norms). This approach was continued by R. Adamczak, W. Bednorz and P. Wolff for measures satisfying modified logarithmic Sobolev inequalities in \cite{A-B-W}. While in \cite{A-W}, concentration around 
the mean is studied, the idea of sharpening concentration inequalities for 
Gaussian and related measures by requiring orthogonality to linear functions 
also appears in P. Wolff \cite{W} as well as in
D. Cordero-Erausquin, M. Fradelizi and B. Maurey \cite{CE-F-M}. For a detailed overview of the concentration of measure phenomenon, see \cite{L, B-L-M}.

Our research started with second order results for functions on 
the $n$-sphere orthogonal to linear functions \cite{B-C-G},
with an approach which has been extended in \cite{G-S} for measures satisfying
logarithmic Sobolev inequalities. This includes discrete models as well as 
differentiable functions on open subsets of $\mathbb{R}^n$.
These results were extended to arbitrary higher orders in \cite{B-G-S}.

While in \cite{B-G-S}, measures satisfying a logarithmic Sobolev
inequality were considered, the aim of this note is to prove similar results for
measures satisfying a Poincaré-type inequality, i.\,e. a weaker assumption.
To this end, let us recall that a Borel probability measure $\mu$
on an open set $G \subset \mathbb{R}^n$ is said to satisfy 
a \emph{Poincaré-type inequality} with constant $\sigma^2 > 0$
if for any bounded smooth function $f$ on $G$ with gradient $\nabla f$,
\begin{gather}
\label{PI1}
\text{Var}_\mu (f) \le \sigma^2 \int |\nabla f|^2\, d\mu.
\end{gather}
Here, $\text{Var}_\mu (f) = \int f^2\, d\mu - (\int f\, d\mu)^2$ denotes the
variance. When considering $\sigma$ instead of $\sigma^2$ itself, we
will always assume it to be positive.

Given a function $f \in \mathcal{C}^d(G)$,
we define $f^{(d)}$ to be the (hyper-) matrix whose entries
\begin{equation}
\label{Hesseallgem}
f^{(d)}_{i_1 \ldots i_d}(x) = \partial_{i_1 \ldots i_d} f(x), \qquad
d = 1,2,\dots
\end{equation}
represent the $d$-fold (continuous) partial derivatives of $f$ at $x \in G$. 
By considering $f^{(d)}(x)$ as a symmetric multilinear $d$-form, we define 
operator-type norms by
\begin{equation}
\label{Operatornorm}
|f^{(d)}(x)|_\mathrm{Op} = 
\sup \left\{ f^{(d)}(x)[v_1, \ldots, v_d] \colon |v_1| = \ldots |v_d| = 1\right\}.
\end{equation}
For instance, $|f^{(1)}(x)|_\mathrm{Op}$ is the Euclidean norm of 
the gradient $\nabla f(x)$, and $|f^{(2)}(x)|_\mathrm{Op}$ is the operator norm 
of the Hessian $f''(x)$. Furthermore, we will use 
the short-hand notation
\begin{equation}
\label{Lpnorm2}
\lVert f^{(d)} \rVert_{\mathrm{Op}, p} =
\left(\int_G |f^{(d)}|_\mathrm{Op}^p\, d\mu\right)^{1/p}, \qquad p \in (0, \infty].
\end{equation}
For $p = \infty$, the right-hand side has to be read as the $L^\infty$-norm of $|f^{(d)}|_\mathrm{Op}$.

We now have the following:

\begin{satz}
\label{kontinuierlich}
Let $\mu$ be a probability measure on an open set $G \subset \mathbb{R}^n$ satisfying a Poincaré-type inequality
with constant $\sigma^2 > 0$, and let 
$f \colon G \to \mathbb{R}$ be a $\mathcal{C}^d$-smooth function with 
$\int_G f\, d\mu = 0$. Assuming the conditions
\begin{align}
\label{Bed1}
\lVert f^{(k)} \rVert_{\mathrm{Op},2} &\le 
\sigma^{d-k}\qquad \forall k = 1, \ldots, d-1,\\
\label{Bed2}
\lVert f^{(d)} \rVert_{\mathrm{Op}, \infty} &\le 1,
\end{align}
there exists some universal constant $c > 0$ such that
$$
\int_G \exp \left(\frac{c}{\sigma}\, |f|^{1/d}\right) d\mu \, \le \, 2.
$$
\end{satz}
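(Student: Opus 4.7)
The plan is to reduce the exponential integrability to a polynomial $L^p$-moment bound of the form
$$
\lVert f\rVert_{L^p(\mu)} \,\le\, C_d\, p^d \sigma^d, \qquad p\ge 2,
$$
with $C_d$ depending only on $d$. Once this is in hand, Chebyshev's inequality together with optimization in $p$ produces the sub-exponential tail $\mu(|f|>t)\le \exp(-c(t/\sigma^d)^{1/d})$, and a layer-cake integration then yields $\int \exp(c'\sigma^{-1}|f|^{1/d})\,d\mu\le 2$ for a sufficiently small $c'>0$.

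The main auxiliary tool is a Poincar\'e-type moment inequality in the spirit of Aida and Stroock: for every nonnegative smooth $g$ and $q\ge 1$,
$$
\lVert g\rVert_{2q} \,\le\, \lVert g\rVert_q + Cq\sigma\,\lVert |\nabla g|\rVert_{2q}.
$$
To derive it, one applies the Poincar\'e inequality \eqref{PI1} to $h=g^q$, which gives $\lVert g\rVert_{2q}^{2q}-\lVert g\rVert_q^{2q}\le q^2\sigma^2 \int g^{2q-2}|\nabla g|^2\,d\mu$, bounds the integral via H\"older with exponents $q/(q-1)$ and $q$ by $\lVert g\rVert_{2q}^{2q-2}\lVert |\nabla g|\rVert_{2q}^2$, and divides by $\lVert g\rVert_{2q}^{2q-2}$ while using $\lVert g\rVert_q\le \lVert g\rVert_{2q}$.

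The next step is to iterate this inequality across derivative orders. Writing $T_k(p):=\lVert f^{(k)}\rVert_{\mathrm{Op},p}$ (so that $T_0(p)=\lVert f\rVert_p$), the crucial geometric input is the almost-everywhere Lipschitz estimate
$$
\bigl|\nabla\,|f^{(k)}(x)|_{\mathrm{Op}}\bigr| \,\le\, |f^{(k+1)}(x)|_{\mathrm{Op}},
$$
which follows by differentiating under the supremum defining \eqref{Operatornorm}. Applying the moment inequality to $g=|f^{(k)}|_{\mathrm{Op}}$ (and to $g=|f|$ when $k=0$) then yields the recursion
$$
T_k(p) \,\le\, T_k(p/2) + Cp\sigma\, T_{k+1}(p).
$$
Combined with the boundary values $T_d(p)\le \lVert f^{(d)}\rVert_{\mathrm{Op},\infty}\le 1$ from \eqref{Bed2}, $T_k(2)\le \sigma^{d-k}$ for $k=1,\ldots,d-1$ from \eqref{Bed1}, and $T_0(2)=\lVert f\rVert_2\le \sigma\, T_1(2)\le \sigma^d$ from a single application of \eqref{PI1} to the centered $f$, a downward induction on $k$ gives $T_k(p)\le \widetilde C_{d-k}\,p^{d-k}\sigma^{d-k}$: plug the inductive hypothesis for $T_{k+1}$ into the recursion, unroll the $T_k(p/2)$ term down to $T_k(2)$, and bound the resulting geometric sum $\sigma^{d-k}\sum_i(p/2^i)^{d-k}$ by a constant multiple of $p^{d-k}\sigma^{d-k}$. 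The case $k=0$ is exactly the desired moment bound on $f$.

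The principal obstacle is that $|f^{(k)}|_{\mathrm{Op}}$ is in general only Lipschitz and possibly unbounded, so \eqref{PI1} cannot be applied to $h=|f^{(k)}|_{\mathrm{Op}}^q$ directly as the stated hypothesis requires $h$ to be bounded and smooth. This is handled in the standard fashion by mollifying and truncating, applying Poincar\'e in the approximating setting, and passing to the limit via Fatou. The subsequent bookkeeping of the $d$-dependent constants through the recursion and the final Chebyshev optimization in $p$ is routine.
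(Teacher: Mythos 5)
Your argument is correct and shares the paper's overall architecture --- the gradient bound $|\nabla\,|f^{(k)}|_{\mathrm{Op}}| \le |f^{(k+1)}|_{\mathrm{Op}}$ (Lemma \ref{itGradHess}; it is cleaner to prove this via the generalized modulus of the gradient and a Taylor expansion than by ``differentiating under the supremum'', since the supremum need not be differentiable), a Poincar\'e-driven recursion over derivative orders culminating in $\lVert f\rVert_p \le C\,(p\sigma)^d$ for $p\ge 2$, and a routine passage from this moment growth to exponential integrability of $|f|^{1/d}$ --- but it substitutes a different key moment lemma. The paper's Lemma \ref{momentslemma}, proved by a symmetrization argument on the product space $G\times G$, gives $\lVert g\rVert_p \le \lVert g\rVert_2 + \frac{\sigma p}{\sqrt 2}\lVert\,|\nabla g|\,\rVert_p$ in one stroke, so each derivative level is handled by a single application (see \eqref{MomenteiteriertdiffA}) and \eqref{pfstep} with the explicit universal constant $c=1/(12\e)$ drops out; your Aida--Stroock-type doubling $\lVert g\rVert_{2q}\le\lVert g\rVert_q + q\sigma\lVert\,|\nabla g|\,\rVert_{2q}$ is more elementary to derive (Poincar\'e for $g^q$ plus H\"older), at the price of an extra dyadic iteration in $p$ within each level. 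The only point your sketch leaves implicit is that the theorem asserts a \emph{universal} constant, independent of $d$, whereas you only record ``$C_d$ depending on $d$''; this does work out, but it requires noticing that in the unrolling the factor $2^{d-k-1}$ coming from evaluating $T_{k+1}$ at $p/2^{i-1}$ is offset by the geometric sum $\sum_{i\ge 1}2^{-i(d-k)}=(2^{d-k}-1)^{-1}\le 2^{-(d-k-1)}$, so that the induction $T_k(p)\le A_k(p\sigma)^{d-k}$ closes with $A_k\le A_{k+1}+2^{-(d-k)}$, hence $\lVert f\rVert_p\le 2(p\sigma)^d$, after which your Chebyshev-plus-layer-cake finish (or the series criterion \eqref{subexp} used in the paper) yields a $d$-free $c$. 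With that bookkeeping made explicit, your proof is complete; the remaining issues you flag (truncation/mollification for the merely locally Lipschitz $|f^{(k)}|_{\mathrm{Op}}$, and the use of $\int_G f\,d\mu=0$ only to get $\lVert f\rVert_2\le\sigma^d$) are handled exactly as in the paper.
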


Here, a possible choice is $c = 1/(12\e)$. Comparing Theorem \ref{kontinuierlich}
to its analogue in presence of a logarithmic Sobolev inequality, i.\,e. Theorem 1.6
in \cite{B-G-S}, we see that
under the same assumptions \eqref{Bed1} and \eqref{Bed2}, logarithmic
Sobolev inequalities yield exponential moment bounds for $|f|^{2/d}$,
whereas Poincaré-type inequalities provide exponential moments
for $|f|^{1/d}$ only. This corresponds to the well-known behaviour in case of $d=1$.

If $f$ has centered partial derivatives of order up to $d-1$, 
it is possible to replace \eqref{Bed1} by a somewhat simpler condition. 
To this end, we need to involve 
Hilbert--Schmidt-type norms $|f^{(d)}(x)|_\mathrm{HS}$ 
defined as the Euclidean 
norm of $f^{(d)}(x) \in \mathbb{R}^{n^d}$. Similarly to \eqref{Lpnorm2}, 
$\lVert f^{(d)}\rVert_{\mathrm{HS}, 2}$ then denotes the $L^2$-norm
of $|f^{(d)}|_\mathrm{HS}$. In detail:

\begin{satz}
\label{kontinuierlichmAbl}
Let $\mu$ be a probability measure on an open set $G \subset \mathbb{R}^n$ satisfying a Poincaré-type
inequality with constant $\sigma^2$, and let 
$f \colon G \to \mathbb{R}$ be a $\mathcal{C}^d$-smooth function such that
$$
\int_G f\, d\mu = 0\qquad \text{and}\qquad 
\int_G \partial_{i_1 \ldots i_k} f\, d\mu = 0
$$
for all $k = 1, \ldots, d-1$ and $1 \le i_1, \ldots, i_k \le n$.
Assuming that
$$
\lVert f^{(d)} \rVert_{\mathrm{HS}, 2} \le 1\qquad \text{and}\qquad 
\lVert f^{(d)} \rVert_{\mathrm{Op}, \infty} \le 1,
$$
there exists some universal constant $c > 0$ such that
$$
\int_G \exp \left(\frac{c}{\sigma}\, |f|^{1/d}\right) d\mu \le 2.
$$

\end{satz}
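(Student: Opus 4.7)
The plan is to reduce Theorem \ref{kontinuierlichmAbl} to Theorem \ref{kontinuierlich} by using the centeredness of the lower-order partial derivatives together with the Poincaré inequality \eqref{PI1} iteratively, so that the HS-norm bound on $f^{(d)}$ propagates down to $L^2$ bounds on all lower-order derivatives.

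Concretely, I would first fix $k \in \{1,\ldots,d-1\}$ and a multi-index $i_1,\ldots,i_k$, and apply \eqref{PI1} to the smooth function $g = \partial_{i_1\ldots i_k} f$, which is centered by hypothesis. This gives
\[
\int_G (\partial_{i_1\ldots i_k} f)^2\, d\mu \;\le\; \sigma^2 \int_G |\nabla \partial_{i_1\ldots i_k} f|^2\, d\mu \;=\; \sigma^2 \sum_{j=1}^n \int_G (\partial_{i_1\ldots i_k j} f)^2\, d\mu.
\]
Summing over $i_1,\ldots,i_k$ and recognizing the squared HS-norm on both sides yields the key recursion
\[
\lVert f^{(k)} \rVert_{\mathrm{HS},2}^2 \;\le\; \sigma^2\, \lVert f^{(k+1)} \rVert_{\mathrm{HS},2}^2, \qquad k = 1,\ldots,d-1.
\]
Iterating from $k = d-1$ down and using the hypothesis $\lVert f^{(d)} \rVert_{\mathrm{HS},2} \le 1$ gives $\lVert f^{(k)} \rVert_{\mathrm{HS},2} \le \sigma^{d-k}$ for every $k = 1,\ldots,d-1$.

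To finish, I would invoke the elementary inequality $|f^{(k)}(x)|_{\mathrm{Op}} \le |f^{(k)}(x)|_{\mathrm{HS}}$, which holds for any symmetric multilinear $k$-form (since it is just the bound of a multilinear map evaluated on unit vectors by its Frobenius norm, coordinate-wise Cauchy--Schwarz). Integrating this pointwise bound yields $\lVert f^{(k)} \rVert_{\mathrm{Op},2} \le \lVert f^{(k)} \rVert_{\mathrm{HS},2} \le \sigma^{d-k}$, i.e.\ condition \eqref{Bed1} of Theorem \ref{kontinuierlich}. Since \eqref{Bed2} is assumed directly, Theorem \ref{kontinuierlich} applies and gives the claimed exponential moment bound.

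The main technical issue is not mathematical depth but the applicability of \eqref{PI1} to $g = \partial_{i_1\ldots i_k} f$: as stated, the Poincaré inequality is formulated for bounded smooth functions, while for a general $\mathcal{C}^d$-function on an open set $G$ the partial derivatives need not be bounded, and we only have $f^{(d)} \in L^\infty$ plus $f^{(d)} \in L^2$. This is a standard approximation issue (truncation and mollification of $g$, or extension by density of the Poincaré inequality to the appropriate $W^{1,2}(\mu)$-space); I would dispose of it by the usual monotone approximation, noting that all the $L^2$-quantities we need are finite by the assumptions and the above recursion applied to truncated versions, so passing to the limit is routine.
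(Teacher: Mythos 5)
Your proposal is correct and uses essentially the same ingredients as the paper's proof: the recursion $\lVert f^{(k)} \rVert_{\mathrm{HS},2} \le \sigma \lVert f^{(k+1)} \rVert_{\mathrm{HS},2}$ obtained by applying the Poincar\'e inequality to the centered partial derivatives (this is exactly \eqref{MomenteiteriertB}), combined with the domination of operator norms by Hilbert--Schmidt norms. The only difference is organizational: you verify \eqref{Bed1} and invoke Theorem \ref{kontinuierlich} as a black box, while the paper re-runs the moment iteration with Hilbert--Schmidt norms in the intermediate terms before concluding in the same way, so the two arguments coincide in substance and yield the same constant.
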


Here again, a possible choice is $c = 1/(12\e)$.

By Chebyshev's inequality, Theorem \ref{kontinuierlich} immediately
yields
$$\mu (|f| \ge t) \le 2 \e^{-ct^{1/d}/\sigma}$$
for any $t \ge 0$. For small values of $t$, it is possible to obtain
refined tail estimates in the spirit of R. Adamczak \cite{A}, Theorem 7,
or R. Adamczak and P. Wolff \cite{A-W}, Theorem 3.3 (with $\gamma = 1$ using their notation), by analyzing the
proof of Theorem~\ref{kontinuierlich}:

\begin{korollar}
	\label{KorrTails}
Let $\mu$ be a probability measure on an open set $G \subset \mathbb{R}^n$ satisfying a Poincaré-type
inequality with constant $\sigma^2 > 0$, and let 
$f \colon G \to \mathbb{R}$ be a $\mathcal{C}^d$-smooth function with 
$\int_G f\, d\mu = 0$. For any $t \ge 0$, set
$$\eta_f(t) := \min \Big(\frac{\sqrt{2}t^{1/d}}{\sigma \lVert f^{(d)} \rVert_{\mathrm{Op}, \infty}^{1/d}}, \min_{k=1, \ldots, d-1}
\frac{\sqrt{2}t^{1/k}}{\sigma \lVert f^{(k)} \rVert_{\mathrm{Op},2}^{1/k}} \Big).$$
Then,
$$\mu(|f| \ge t) \le \e^2 \exp(-\eta_f(t)/(d\e)).$$
\end{korollar}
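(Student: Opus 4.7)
The plan is to revisit the proof of Theorem \ref{kontinuierlich} and, instead of only reading off the final exponential moment bound, to extract the intermediate polynomial $L^p$-moment bounds for $|f|$ that it produces. Applying Markov's inequality with an optimal exponent $p$ for each derivative order $k$ then yields a tail bound, and specialising to the minimising $k$ produces the corollary.

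The proof of Theorem \ref{kontinuierlich} rests on an iterated $L^p$-version of the Poincaré-type inequality \eqref{PI1}, schematically of the form $\lVert g-\mathbb{E}g\rVert_p \le (\sigma p/\sqrt{2})\, \lVert|\nabla g|\rVert_p$ for $p \ge 2$, applied successively with $g$ replaced by derivatives of $f$. Iterating $k$ times gives, for every $p \ge 2$,
\begin{align*}
\lVert f\rVert_p &\le \Big(\tfrac{\sigma p}{\sqrt{2}}\Big)^k \lVert f^{(k)}\rVert_{\mathrm{Op},2}, \qquad k=1,\ldots,d-1,\\
\lVert f\rVert_p &\le \Big(\tfrac{\sigma p}{\sqrt{2}}\Big)^d \lVert f^{(d)}\rVert_{\mathrm{Op},\infty};
\end{align*}
these are precisely the estimates that, under the hypotheses \eqref{Bed1}--\eqref{Bed2}, are then summed (and trivially bounded above) to yield the exponential moment bound of Theorem \ref{kontinuierlich}.

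For a fixed $k \in \{1,\ldots,d\}$ and $p \ge 2$, Markov's inequality thus gives
\[
\mu(|f|\ge t) \le t^{-p}\lVert f\rVert_p^p \le \Big(\frac{p}{\eta_k(t)}\Big)^{pk},
\]
where $\eta_k(t) := \sqrt{2}\, t^{1/k}/(\sigma \lVert f^{(k)}\rVert^{1/k})$ is the $k$-th term in the definition of $\eta_f(t)$. If $\eta_k(t) \ge 2e$, then the choice $p = \eta_k(t)/e \ge 2$ is admissible, turns the parenthesis into $1/e$, and produces the bound $\exp(-k\eta_k(t)/e) \le \exp(-\eta_k(t)/(de))$ (since $dk\ge 1$). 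If instead $\eta_k(t) < 2e$, then $\eta_k(t)/(de) < 2/d \le 2$, so $e^{2}\exp(-\eta_k(t)/(de)) \ge 1$ and the estimate holds trivially. In both regimes $\mu(|f|\ge t) \le e^{2}\exp(-\eta_k(t)/(de))$; since this holds for every $k$, it holds in particular for the $k$ achieving $\eta_f(t) = \min_k \eta_k(t)$, which yields the claim.

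The main obstacle will be the extraction of the polynomial $L^p$-moment bounds above. Each iteration of the $L^p$-Poincaré inequality introduces factors involving mixed operator-type norms of higher-order derivatives, and one has to verify that these collapse cleanly into $\lVert f^{(k)}\rVert_{\mathrm{Op},2}$ (respectively $\lVert f^{(d)}\rVert_{\mathrm{Op},\infty}$) with the stated constant $1/\sqrt{2}$ without any residual contributions from intermediate derivative orders. Once these moment bounds are in hand, the subsequent optimisation in $p$ is routine.
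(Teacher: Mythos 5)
There is a genuine gap, and it sits exactly where you flag it: the per-order moment bounds you propose to extract, $\lVert f\rVert_p \le (\sigma p/\sqrt{2})^k\, \lVert f^{(k)}\rVert_{\mathrm{Op},2}$ for a single $k \in \{1,\dots,d-1\}$, do not follow from the iteration and are false in general. The iteration starts with \eqref{momentscentered} applied to the centered $f$, giving $\lVert f\rVert_p \le (\sigma p/\sqrt{2})\lVert f^{(1)}\rVert_{\mathrm{Op},p}$, but from then on the functions $\lvert f^{(k)}\rvert_{\mathrm{Op}}$ are not centered, so one can only use \eqref{moments}, which produces an additive $L^2$-term at every step. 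What comes out is the sum \eqref{pfstep},
$\lVert f \rVert_p \le \sum_{k=1}^{d-1} (\sigma p/\sqrt{2})^k \lVert f^{(k)}\rVert_{\mathrm{Op},2} + (\sigma p/\sqrt{2})^d \lVert f^{(d)}\rVert_{\mathrm{Op},p}$,
with all intermediate orders present; the residual contributions you hoped would vanish are genuinely there. A single-$k$ bound cannot hold: for a nonzero centered linear $f$ one has $f^{(2)}\equiv 0$, so your $k=2$ inequality would force $f\equiv 0$; even for $k=1$ the iteration gives $\lVert f^{(1)}\rVert_{\mathrm{Op},p}$, not $\lVert f^{(1)}\rVert_{\mathrm{Op},2}$, and these are not comparable without descending all the way to the $L^\infty$-bounded $d$-th derivative. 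Consequently your Markov step applied to the $k$ minimising $\eta_k(t)$ rests on an unavailable estimate (and, tellingly, would prove a bound with exponent $-k\,\eta_k(t)/\e$, strictly stronger than the corollary).

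The repair is the paper's actual argument, and it explains why $\eta_f$ is a minimum over all orders and why the factor $1/(d\e)$ appears: one keeps the full sum \eqref{pfstep} (with $\lVert f^{(d)}\rVert_{\mathrm{Op},p}$ replaced by $\lVert f^{(d)}\rVert_{\mathrm{Op},\infty}$) and chooses the single exponent $p=\eta_f(t)$. Since $\eta_f(t)\le \eta_k(t)$ for every $k$, each of the $d$ summands is at most $t$, so $\e\lVert f\rVert_{\eta_f(t)} \le d\e\, t$, and the Chebyshev-type bound $\mu(|f|\ge \e\lVert f\rVert_p)\le \e^{-p}$ gives $\mu(|f|\ge d\e\,t)\le \e^{-\eta_f(t)}$ when $\eta_f(t)\ge 2$, the case $\eta_f(t)<2$ being absorbed by the prefactor $\e^2$. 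Rescaling $f$ by $d\e$ and using $\eta_{d\e f}(t)\ge \eta_f(t)/(d\e)$ then yields the stated inequality. Your optimisation-in-$p$ step and the treatment of the small-$\eta$ regime are fine; it is only the claimed clean collapse of the iterated Poincar\'e inequality onto a single derivative order that fails, and with it the proposed route.
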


As a generalization of these bounds, we may consider measures
satisfying weighted Poincaré-type inequalities. Recall that
a Borel probability measure $\mu$ on an open set $G \subset \mathbb{R}^n$
is said to satisfy  a \emph{weighted Poincaré-type inequality}
if for any bounded smooth function $f$ on $G$ with gradient $\nabla f$,
\begin{gather}
\label{wPI}
\text{Var}_\mu (f) \le  \int |\nabla f|^2w^2\, d\mu,
\end{gather}
where $w \colon G \to [0, \infty)$ is some measurable function. Examples
include Cauchy measures and Beta distributions. For a detailed discussion see
S.\,G. Bobkov and M. Ledoux \cite{B-L}.

In these cases we cannot expect exponential integrability as in Theorem
\ref{kontinuierlich} any more, since distributions satisfying
\eqref{wPI} may have a slow, say, polynomial, decay at infinity.
Nevertheless, it is still possible to obtain higher order concentration
results by controlling the $L^p$-norms of $f$ and its derivatives.
In detail:

\begin{proposition}
	\label{wPItails}
Let $\mu$ be a probability measure on an open set $G \subset \mathbb{R}^n$ satisfying a weighted Poincaré-type inequality \eqref{wPI}, and let 
$f \colon G \to \mathbb{R}$ be a $\mathcal{C}^d$-smooth function with
$\int_G f d\mu= 0$.
Then, for any $p \ge 2$,
\begin{align*}
\lVert f \rVert_p
& \ \le \
\sum_{k=1}^{d-1} (2^{\frac{k-2}{2}}p \lVert w \rVert_{2^kp})^k\, \lVert f^{(k)} 
\rVert_{\mathrm{Op}, 2}	+
(2^{\frac{d-2}{2}}p)^d \lVert w \rVert_{2^{d-1}p}^{d-1}\, \lVert w \lvert f^{(d)}\rvert_{\mathrm{Op}} \rVert_{2^{d-1}p}\\
& \ \le \
\sum_{k=1}^{d-1} (2^{\frac{k-2}{2}}p \lVert w \rVert_{2^kp})^k\, \lVert f^{(k)} 
\rVert_{\mathrm{Op}, 2}	+
(2^{\frac{d-2}{2}}p \lVert w \rVert_{2^dp})^d\, \lVert f^{(d)} \rVert_{\mathrm{Op}, 2^dp}.
\end{align*}
\end{proposition}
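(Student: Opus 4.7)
The plan is to iterate an $L^p$-version of the weighted Poincar\'e inequality, cascading from $f$ through its successive derivatives. The starting point is the following recursion. For any $\mathcal{C}^1$ function $g$ and any $p \ge 2$, apply \eqref{wPI} to $|g|^{p/2}$ and use the chain rule $|\nabla|g|^{p/2}| = \tfrac{p}{2}|g|^{(p-2)/2}|\nabla g|$ to obtain
\[
\|g\|_p^p - \|g\|_{p/2}^p \le \bigl(\tfrac{p}{2}\bigr)^2 \int |g|^{p-2} |\nabla g|^2 w^2\, d\mu.
\]
H\"older's inequality applied to the three factors on the right-hand side (with exponents $p/(p-2), p, p$) bounds it by $\|g\|_p^{p-2}\, \|w\|_{2p}^2\, \||\nabla g|\|_{2p}^2$, and a short rearrangement using $\|g\|_{p/2} \le \|g\|_p$ yields the key recursion
\[
\|g\|_p \le \|g\|_{p/2} + \tfrac{p}{\sqrt 2}\,\|w\|_{2p}\, \||\nabla g|\|_{2p}.
\]

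Next I would run two interlocked cascades of this recursion. Applied with $g = f$, using $\int f\,d\mu = 0$ and telescoping in $p$ down to the $p = 2$ Poincar\'e bound $\|f\|_2^2 \le \int |\nabla f|^2 w^2\, d\mu$, the intermediate $\|f\|_{p/2^j}$ corrections form a geometric sum and collapse into universal constants, producing $\|f\|_p \le \tfrac{p}{\sqrt 2}\,\|w\|_{2p}\,\||\nabla f|\|_{2p}$. Applied with $g = |f^{(k)}|_{\mathrm{Op}}$ for $k = 1, \ldots, d-1$---together with the pointwise bound $|\nabla|f^{(k)}|_{\mathrm{Op}}| \le |f^{(k+1)}|_{\mathrm{Op}}$ (valid a.e.\ and justified by standard regularization since the operator norm is Lipschitz but not $\mathcal{C}^1$)---the recursion reduces $\||f^{(k)}|_{\mathrm{Op}}\|_{2^kp}$ to its $L^2$-version $\|f^{(k)}\|_{\mathrm{Op},2}$ plus a higher-derivative residual involving $\||f^{(k+1)}|_{\mathrm{Op}}\|_{2^{k+1}p}$.

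Chaining the cascades, each level $k = 1, \ldots, d-1$ multiplies the running coefficient by $2^{(k-2)/2}\, p\, \|w\|_{2^kp}$ and peels off the summand $\|f^{(k)}\|_{\mathrm{Op},2}$; after $d-1$ descent steps the leftover at level $d$ takes the form $(2^{(d-2)/2}p)^d\,\|w\|_{2^{d-1}p}^{d-1}\,\|w|f^{(d)}|_{\mathrm{Op}}\|_{2^{d-1}p}$, giving the first stated inequality. The second inequality follows by a final Cauchy--Schwarz, $\|w|f^{(d)}|_{\mathrm{Op}}\|_{2^{d-1}p} \le \|w\|_{2^dp}\,\|f^{(d)}\|_{\mathrm{Op},2^dp}$.

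The main obstacle is the bookkeeping through the nested cascades: ensuring that the telescoping $\|g\|_{p/2^j}$ corrections collapse into precisely the prescribed $(2^{(k-2)/2}p)^k\,\|w\|_{2^kp}^k$ prefactors at each level, and that the doubling $L^q$-exponents compound correctly. Justifying the gradient estimate for the non-smooth operator norm $|f^{(k)}|_{\mathrm{Op}}$ via mollification is an auxiliary but essentially routine technical point.
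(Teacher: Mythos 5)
Your overall architecture (a moment recursion cascaded through the derivative levels, the pointwise bound $|\nabla\,|f^{(k)}|_{\mathrm{Op}}|\le|f^{(k+1)}|_{\mathrm{Op}}$, H\"older to split off $\lVert w\rVert_{2q}$, and a final Cauchy--Schwarz) is the same as the paper's, but your basic recursion is not, and this is where the proof breaks. You obtain
$\lVert g\rVert_p\le\lVert g\rVert_{p/2}+\tfrac{p}{\sqrt2}\lVert w\rVert_{2p}\lVert\nabla g\rVert_{2p}$
by applying \eqref{wPI} directly to $|g|^{p/2}$ (this step is fine; in fact it holds with constant $p/2$). But the subsequent claim that the dyadic telescoping of the $\lVert g\rVert_{p/2^j}$ remainders ``collapses into universal constants, producing $\lVert f\rVert_p\le\tfrac{p}{\sqrt2}\lVert w\rVert_{2p}\lVert\nabla f\rVert_{2p}$'' is false: each telescoping step contributes a gradient term with coefficient of order $p/2^{j}$, and after bounding all of them by the top exponent the geometric sum gives a total coefficient of order $p$ (at best, using the sharper per-step constant $p/2$; with your stated constant it is about $\sqrt2\,p$), not $p/\sqrt2$. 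The same loss occurs at every level $k$ of the cascade, so the accumulated prefactor at level $k$ is at least $\prod_{j=0}^{k-1}2^jp=(2^{(k-1)/2}p)^k$ rather than the required $(2^{(k-2)/2}p)^k$ --- i.e.\ off by a factor of at least $2^{k/2}$ (and by $2^k$ with your constants). Hence your route proves an inequality of the same shape but cannot produce the constants stated in the proposition; the ``bookkeeping'' you flag as the main obstacle is in fact an unavoidable loss in this scheme, not a matter of careful accounting.

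The paper avoids the telescoping entirely: its Lemma \ref{momentslemmawPI} (the weighted analogue of Lemma \ref{momentslemma}, proved by the same symmetrization trick) applies the weighted Poincar\'e inequality on the product space $G\times G$ to $u(x,y)=|g(x)-g(y)|^{p/2}\,\mathrm{sign}(g(x)-g(y))$, which has a symmetric distribution and hence mean zero, so the variance is the full second moment and no lower-order term $\lVert g\rVert_{p/2}$ appears. This yields in a single step
$\lVert g\rVert_p\le\lVert g\rVert_2+\tfrac{p}{\sqrt2}\lVert w\,|\nabla g|\rVert_p$
(and $\lVert g\rVert_p\le\tfrac{p}{\sqrt2}\lVert w\,|\nabla g|\rVert_p$ for centered $g$), which jumps directly from exponent $p$ to exponent $2$; iterating this with $g=|f^{(k)}|_{\mathrm{Op}}$ and H\"older gives exactly the prefactors $(2^{(k-2)/2}p\lVert w\rVert_{2^kp})^k$ and the $\lVert w\,|f^{(d)}|_{\mathrm{Op}}\rVert_{2^{d-1}p}$ term, after which your final Cauchy--Schwarz step (together with $\lVert w\rVert_{2^{d-1}p}\le\lVert w\rVert_{2^dp}$) is as in the paper. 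To repair your proof, replace your single-step recursion by this symmetrized version; as an aside, no mollification of $|f^{(k)}|_{\mathrm{Op}}$ is needed, since the paper works with the generalized modulus of the gradient \eqref{generalizedmodulus} and Lemma \ref{itGradHess}.
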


Proposition \ref{wPItails} should be compared to \eqref{pfstep} from
the proof of Theorem \ref{kontinuierlich} in Section 2. In particular,
if the weight function $w$ is bounded by some real number
$\sigma > 0$, $\mu$ clearly satisfies a Poincaré-type inequality
\eqref{PI1} with constant $\sigma^2$. In this case, Proposition
\ref{wPItails} implies a slightly weaker version of \eqref{pfstep},
and it is possible to derive Theorem \ref{kontinuierlich} again
though with a somewhat weaker constant $c = c_d > 0$.

Suitable conditions on the weight function $w$ may still yield exponential-type tails at
least in certain intervals. For instance, the following higher order analogue of
Corollary 4.2 in \cite{B-L} holds:

\begin{korollar}
\label{CorrExpTails}
Let $\mu$ be a probability measure on an open set $G \subset \mathbb{R}^n$ satisfying a weighted Poincaré-type
inequality \eqref{wPI}, and let  $f \colon G \to \mathbb{R}$ be a
$\mathcal{C}^d$-smooth function with $\int_G f d\mu= 0$ and such that \eqref{Bed1} (with $\sigma^2 = 1$)
and \eqref{Bed2} from Theorem \ref{kontinuierlich} hold. Assume $\lVert w
\rVert_{2^dp} \le C$ for some $p \ge 2$ and some $C \ge 2^{-(d-1)/2}$.
Then, for any $0 \le t \le (2^{\frac{d+5}{2}}C \e p)^d$,
$$\mu(|f| \ge t) \le \e^{d/\e} \exp(-dt^{1/d}/(2^{\frac{d+5}{2}}C\e)).$$
\end{korollar}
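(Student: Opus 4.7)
The strategy is to combine Proposition \ref{wPItails} with Markov's inequality, letting the integrability exponent depend on $t$.

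For any auxiliary exponent $p' \in [2, p]$, I apply Proposition \ref{wPItails} in its second form. Since $\mu$ is a probability measure, $L^q$-norms are monotone in $q$, so $\lVert w\rVert_{2^k p'} \le \lVert w\rVert_{2^d p} \le C$ for every $k \le d$; combined with \eqref{Bed1} at $\sigma^2 = 1$ and the pointwise bound $|f^{(d)}|_{\mathrm{Op}} \le 1$ from \eqref{Bed2} (which also yields $\lVert f^{(d)}\rVert_{\mathrm{Op}, 2^d p'} \le 1$), the proposition yields
$$\lVert f\rVert_{p'} \le \sum_{k=1}^{d}(2^{(k-2)/2}\,p'\,C)^{k}.$$
Setting $y := 2^{(d-2)/2}p'C$, the hypotheses $p' \ge 2$ and $C \ge 2^{-(d-1)/2}$ give $y \ge \sqrt 2$, and the identity $(2^{(k-2)/2}p'C)^{k} = y^{k} \cdot 2^{k(k-d)/2} \le y^{k} \le y^{d}$ for $k \in \{1,\ldots,d\}$ collapses the sum to $\lVert f\rVert_{p'} \le d\,(2^{(d-2)/2}p'C)^{d}$. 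Markov's inequality then delivers
$$\mu(|f| \ge t) \le \bigl(d\,(2^{(d-2)/2}p'C)^{d}/t\bigr)^{p'}.$$

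The crux is to select $p'$ as a function of $t$. First, if $t \le (2^{(d+5)/2}C)^{d}$, then $dt^{1/d}/(2^{(d+5)/2}C\e) \le d/\e$, so the right-hand side of the asserted tail bound is already $\ge 1$ and the claim is trivial. Second, if $(2^{(d+7)/2}C\e)^{d} \le t \le (2^{(d+5)/2}C\e p)^{d}$, I take $p' := t^{1/d}/(2^{(d+5)/2}C\e)$, which lies in $[2, p]$ by the range of $t$; a direct substitution gives $d(2^{(d-2)/2}p'C)^{d}/t = d\cdot 2^{-7d/2}\,\e^{-d} \le \e^{-d}$ (using $d \le 2^{7d/2}$ for $d \ge 1$), so the Markov bound simplifies to $\exp(-dp') = \exp\bigl(-dt^{1/d}/(2^{(d+5)/2}C\e)\bigr)$, as desired.

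The remaining intermediate range $(2^{(d+5)/2}C)^{d} < t < (2^{(d+7)/2}C\e)^{d}$, in which the optimal choice of $p'$ falls below $2$ and one is forced to take $p' = 2$, is the main obstacle. Markov then gives $\mu(|f| \ge t) \le (d\cdot 2^{d^{2}/2} C^{d}/t)^{2}$, and writing $s := dt^{1/d}/(2^{(d+5)/2}C\e) \in (d/\e, 2d)$, the remaining inequality becomes, after taking logarithms,
$$(2d+2)\ln d - 2d\ln s + s \le d/\e + 5d\ln 2 + 2d.$$
The left-hand side has derivative $-2d/s + 1 < 0$ on $(0, 2d)$, so it is maximized at $s = d/\e$, where the inequality reduces to $2\ln d \le 5d\ln 2$, an elementary bound valid for every $d \ge 1$. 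The prefactor $\e^{d/\e}$ in the statement is precisely what absorbs the non-optimality of $p' = 2$ in this intermediate window.
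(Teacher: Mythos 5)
Your proposal is correct and takes essentially the same route as the paper: the moment bound from Proposition \ref{wPItails}, combined with monotonicity of $L^q$-norms of $w$ and the hypotheses \eqref{Bed1}--\eqref{Bed2} to collapse the sum to a bound of order $(2^{(d-2)/2}Cq)^d$, followed by Markov's inequality at the $t$-dependent exponent $q \approx t^{1/d}/(2^{(d+5)/2}C\e)$. The only difference is bookkeeping for small exponents: the paper extends the moment bound to all $0<q\le p$ at the cost of the factor $\e^{d/\e}$ and then maximizes the concave function $q\mapsto dq(\log s-\log q)$ over $(0,p]$, whereas you restrict the exponent to $[2,p]$ and verify the intermediate window at $q=2$ by a direct monotonicity computation, which is equally valid.
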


Hence, we obtain exponential-type tail bounds on an interval of length proportional
to $p^d$. Note that if $t > (2^{\frac{d+5}{2}}C \e p)^d$, we may still give bounds on $\mu(|f| \ge t)$ by taking \eqref{genbd} for $q = p$ from the proof of Corollary \ref{CorrExpTails}. We omit details at this point.
The assumption $C \ge 2^{-(d-1)/2}$ is needed for technical reasons. In fact, it guarantees that the quantities $(2^{\frac{k-1}{2}} C)^k$, $k \le d-1$, are bounded by $(2^{\frac{d-1}{2}} C)^d$. For $d = 1$ it can be removed. It is possible to adapt the proof for $0 < C < 2^{-(d-1)/2}$ and obtain similar bounds.

For $d=1$, Corollary \ref{CorrExpTails} gives back a version of Corollary 4.2 from \cite{B-L} up to constants, though with a boundedness condition on $\lVert w \rVert_{2p}$ rather than $\lVert w \rVert_p$. This may be adjusted by working with the first inequality from Proposition \ref{wPItails}, in which case we directly get back the \cite{B-L} result. In the same way, it is possible to derive a result similar to Corollary \ref{CorrExpTails} which requires bounds on $\lVert w \rVert_{2^{d-1}p}$. We have chosen to work with the second inequality from Proposition \ref{wPItails} instead (and thus need bounds on $\lVert w \rVert_{2^dp}$) since this is technically slightly more convenient.

Under stronger moment conditions on the weight function $w$, e.\,g.
$\int \e^{w^2/\alpha} d\mu \le 2$ for some $\alpha > 0$, it is possible to obtain
exponential-type tail bounds even on the whole positive half-line, cf. Corollary 4.3
in \cite{B-L}.

\subsection*{Outline}
In Section 2, we give the proofs of the results stated above. In
Section 3, we provide some applications, including homogeneous
multilinear polynomials of order $d$ and linear eigenvalue statistics
in random matrix theory.

\section{Proofs}

Given a continuous function on an open subset $G \subset \mathbb{R}^n$, 
the equality
\begin{equation}
\label{generalizedmodulus}
|\nabla f(x)| = \limsup_{x \to y} \frac{|f(x) - f(y)|}{|x-y|}, \qquad x \in G,
\end{equation}
may be used as definition of the generalized modulus of the gradient
of $f$. The function $|\nabla f|$ is Borel measurable, and if $f$ is 
differentiable at $x$, the generalized modulus of 
the gradient agrees with the Euclidean norm of the usual gradient.
This operator 
preserves many identities from calculus in form of inequalities, such as 
a ``chain rule inequality''
\begin{equation}
\label{chainrule}
|\nabla T(f)| \le |T'(f)||\nabla f|,
\end{equation}
where $|T'|$ is understood according to \eqref{generalizedmodulus} again.

As shown in \cite{B-G-S}, Lemma 4.1, using the generalized modulus of
the gradient, the operator norms of the derivatives of 
consecutive orders are related as follows:

\begin{lemma}
\label{itGradHess}
Given a $\mathcal{C}^d$-smooth function 
$f \colon G \to \mathbb{R}$, $d \in \mathbb{N}$, at all points $x \in G$,
$$
|\nabla \lvert f^{(d-1)}(x) \rvert_\mathrm{Op}| \le 
\lvert f^{(d)}(x) \rvert_\mathrm{Op}.
$$
\end{lemma}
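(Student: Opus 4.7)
The plan is to exploit the definition \eqref{generalizedmodulus} of the generalized modulus together with the reverse triangle inequality for the operator norm. Since $|\,|f^{(d-1)}(x)|_\mathrm{Op} - |f^{(d-1)}(y)|_\mathrm{Op}\,| \le |f^{(d-1)}(x) - f^{(d-1)}(y)|_\mathrm{Op}$, it suffices to show
\[
\limsup_{y \to x} \frac{|f^{(d-1)}(x) - f^{(d-1)}(y)|_\mathrm{Op}}{|x-y|} \le |f^{(d)}(x)|_\mathrm{Op}.
\]
By the definition of the operator norm as a supremum over unit tensors, the left-hand difference of norms can be written as the supremum of $(f^{(d-1)}(x) - f^{(d-1)}(y))[v_1, \ldots, v_{d-1}]$ over unit vectors $v_1, \ldots, v_{d-1}$.

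For fixed unit vectors $v_1, \ldots, v_{d-1}$, I would introduce the scalar $\mathcal{C}^1$-function $g_{v_1,\ldots,v_{d-1}}(z) := f^{(d-1)}(z)[v_1, \ldots, v_{d-1}]$. Its ordinary gradient acts on a vector $u$ by $\nabla g_{v_1,\ldots,v_{d-1}}(z)\cdot u = f^{(d)}(z)[v_1, \ldots, v_{d-1}, u]$, so by the definition \eqref{Operatornorm} of the operator norm,
\[
|\nabla g_{v_1,\ldots,v_{d-1}}(z)| \le |f^{(d)}(z)|_\mathrm{Op}
\]
uniformly in $v_1, \ldots, v_{d-1}$. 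Applying the fundamental theorem of calculus along the segment joining $y$ to $x$ (which lies in $G$ for $y$ close enough to $x$) yields
\[
|g_{v_1,\ldots,v_{d-1}}(x) - g_{v_1,\ldots,v_{d-1}}(y)| \le |x-y|\, \sup_{t \in [0,1]} |f^{(d)}(y + t(x-y))|_\mathrm{Op}.
\]
Taking the supremum over $v_1, \ldots, v_{d-1}$ on the left, dividing by $|x-y|$, and passing to $\limsup_{y \to x}$, continuity of $f^{(d)}$ (and hence of $|f^{(d)}|_\mathrm{Op}$) lets me replace the supremum over the segment with the value at $x$, giving the claim.

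The main (minor) obstacle is the bookkeeping of the two suprema: one must interchange the supremum over the unit vectors $v_i$ with the $\limsup$ in $y$. This is legitimate because the bound $|x-y|\sup_{t \in [0,1]} |f^{(d)}(y+t(x-y))|_\mathrm{Op}$ is independent of the $v_i$, so the supremum over $v_i$ passes through trivially. The only analytical input beyond this is the continuity of $x \mapsto |f^{(d)}(x)|_\mathrm{Op}$ as a supremum of continuous multilinear forms, which is immediate from $f \in \mathcal{C}^d$.
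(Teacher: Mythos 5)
Your argument is correct and follows essentially the same route as the paper: reverse triangle inequality for the operator norm, evaluation of the difference $f^{(d-1)}(x)-f^{(d-1)}(y)$ on unit vectors, and a first-order calculus estimate that is uniform in $v_1,\ldots,v_{d-1}$. The only (harmless) variation is that you obtain this estimate via the fundamental theorem of calculus along the segment together with continuity of $\lvert f^{(d)}\rvert_{\mathrm{Op}}$, whereas the paper uses a first-order Taylor expansion at $x$ with a remainder $o(|h|)$ uniform in the unit vectors.
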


\begin{proof}
Indeed, for any $h \in \mathbb{R}^n$, by the triangle inequality,
\begin{align*}
 & 
\big|\,\lvert f^{(d-1)}(x+h) \rvert_\mathrm{Op} - 
\lvert f^{(d-1)}(x) \rvert_\mathrm{Op}\big|
 \, \le \,
\lvert f^{(d-1)}(x+h) - f^{(d-1)}(x) \rvert_\mathrm{Op}\\
 = \, &
\sup \{ (f^{(d-1)}(x+h) - f^{(d-1)}(x))[v_1, \ldots, v_{d-1}] 
\colon v_1, \ldots, v_{d-1} \in S^{n-1} \},
\end{align*}
while, by the Taylor expansion,
$$
(f^{(d-1)}(x+h) - f^{(d-1)}(x))[v_1, \ldots, v_{d-1}] = 
f^{(d)}(x)[v_1, \ldots, v_{d-1}, h] + o(|h|)
$$
as $h \to 0$. Here, the $o$-term can be bounded by a quantity which is 
independent of $v_1, \ldots, v_{d-1} \in S^{n-1}$. As a consequence,
\begin{align*}
 &
\limsup_{h \to 0} 
\frac{|\,\lvert f^{(d-1)}(x+h) \rvert_\mathrm{Op} - \lvert f^{(d-1)}(x) 
\rvert_\mathrm{Op}|}{|h|}\\
 \le \; &
\sup \{ f^{(d)}(x)[v_1, \ldots, v_{d-1}, v_d] \colon v_1, \ldots, v_d \in S^{n-1}\}
 = \lvert f^{(d)}(x) \rvert_\mathrm{Op}.
\end{align*}
\end{proof}

Following the scheme of proof developed in \cite{B-G-S}, we moreover need to establish a recursion for 
the $L^p$-norms of the derivatives 
of $f$ of consecutive orders. To this end, 
we recall a classical result on the moments of Lipschitz functions in the
presence of Poincaré-type inequalities. Here, similarly to \eqref{Lpnorm2}, we write
$$\lVert \nabla g \rVert_{\mathrm{Op}, p} =
\left(\int_G |\nabla g|^p\, d\mu\right)^{1/p}, \qquad p \in (0, \infty],$$
for any locally Lipschitz function $g$ on $G$ with generalized modulus of gradient $|\nabla g|$. In detail:

\begin{lemma}
\label{momentslemma}
Let $\mu$ be a probability measure on an open set $G \subset \mathbb{R}^n$ satisfying a Poincaré-type inequality with
constant $\sigma^2 > 0$, and let $g \colon G \to \mathbb{R}$ be locally Lipschitz with
$\int_G g d\mu = 0$. Then, for any $p \ge 2$,
\begin{equation}
\label{momentscentered}
\int_{G} \lvert g \rvert^p d\mu \le \Big(\frac{\sigma p}{\sqrt{2}}\Big)^p
\int_G \lvert \nabla g \rvert^p d\mu.
\end{equation}
In particular, for any $g \colon G \to \mathbb{R}$ locally Lipschitz,
\begin{equation}\label{moments}
\lVert g \rVert_p \, \le \, \lVert g \rVert_2 +
\frac{\sigma p}{\sqrt{2}}\, \lVert \nabla g \rVert_p.
\end{equation}
\end{lemma}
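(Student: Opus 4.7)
The bound \eqref{moments} will follow at once from \eqref{momentscentered}: with $\bar g := \int_G g\, d\mu$, the function $g - \bar g$ is centered and has gradient $\nabla g$, so \eqref{momentscentered} applied to it together with the triangle inequality will yield $\|g\|_p \leq \|g - \bar g\|_p + |\bar g| \leq (\sigma p/\sqrt 2)\|\nabla g\|_p + |\bar g|$, and Cauchy--Schwarz gives $|\bar g| \leq \|g\|_1 \leq \|g\|_2$.

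For \eqref{momentscentered} the plan is to argue by induction on $p \geq 2$. The base case $p=2$ is the Poincar\'e inequality \eqref{PI1} itself applied to the centered $g$, which gives $\|g\|_2 \leq \sigma \|\nabla g\|_2$, even better than the target by a factor $\sqrt 2$. For the inductive step I would apply \eqref{PI1} to $h := |g|^{p/2}$, where the non-smoothness at zeros of $g$ is handled by a standard regularization (e.g.\ replacing $|g|$ by $\sqrt{g^2+\varepsilon}$ and sending $\varepsilon\to 0$, interpreting gradients via \eqref{generalizedmodulus}). The chain rule \eqref{chainrule} gives $|\nabla h| \leq (p/2)\,|g|^{(p-2)/2}\,|\nabla g|$; combined with H\"older's inequality with conjugate exponents $p/(p-2)$ and $p/2$, this yields the recursion
\begin{equation*}
\|g\|_p^p \ \leq \ \|g\|_{p/2}^p \, + \, \sigma^2(p/2)^2\,\|g\|_p^{p-2}\,\|\nabla g\|_p^2.
\end{equation*}

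To close the recursion I would invoke the elementary observation that any $X \geq 0$ with $X^p \leq A + B X^{p-2}$ must satisfy $X \leq \max\{(2A)^{1/p},\,\sqrt{2B}\}$: otherwise $B X^{p-2} < X^p/2$, forcing $X^p < 2A$, a contradiction. Here $\sqrt{2B} = (\sigma p/\sqrt 2)\|\nabla g\|_p$ is exactly the target bound, while $(2A)^{1/p} = 2^{1/p}\|g\|_{p/2}$ is absorbed via the induction hypothesis. For $p \geq 4$, applying \eqref{momentscentered} at level $p/2$ together with the monotonicity $\|\nabla g\|_{p/2}\leq \|\nabla g\|_p$ gives $\|g\|_{p/2} \leq \sigma p/(2\sqrt 2)\|\nabla g\|_p$, after which the factor $2^{1/p}/2 \leq 1$ handles the rest. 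For $p \in [2,4)$ one uses instead $\|g\|_{p/2} \leq \|g\|_2 \leq \sigma \|\nabla g\|_p$ together with the elementary inequality $2^{1/p+1/2} \leq p$ valid on that interval. The main delicate point is the constant bookkeeping: preserving the factor $1/\sqrt 2$ through every recursion step relies on the two bounds $\sqrt{2B}$ and $(2A)^{1/p}$ matching the target exactly, which in turn dictates the specific choice of exponents $p/(p-2)$ and $p/2$ in the H\"older step; everything else is routine.
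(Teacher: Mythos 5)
Your argument is correct, but it takes a genuinely different route from the paper. The paper proves \eqref{momentscentered} in one shot by symmetrization: it tensorizes the Poincar\'e inequality \eqref{PI1} to the product measure $\mu\otimes\mu$ and applies it to the antisymmetric function $u(x,y)=|g(x)-g(y)|^{p/2}\,\mathrm{sign}(g(x)-g(y))$, whose mean vanishes by symmetry, so the variance is the full second moment and no lower-order term ever appears; H\"older and Jensen then give the bound for $\iint|g(x)-g(y)|^p$, and centering is only used at the end via Jensen. You instead apply \eqref{PI1} directly to $|g|^{p/2}$ on $G$, which produces the extra mean term $\lVert g\rVert_{p/2}^p$, and you absorb it by a dyadic induction $p\mapsto p/2$ combined with the self-improvement observation $X^p\le A+BX^{p-2}\Rightarrow X\le\max\{(2A)^{1/p},\sqrt{2B}\}$; I checked the constant bookkeeping ($2^{1/p}/2\le 1$ for $p\ge 4$, and $2^{1/p+1/2}\le p$ on $[2,4)$ with the base case $\lVert g\rVert_2\le\sigma\lVert\nabla g\rVert_2$) and it does reproduce the exact constant $\sigma p/\sqrt{2}$. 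What each approach buys: yours avoids the tensorization step \eqref{produktpoincare} and stays on $G$, at the price of an iteration; the paper's symmetrization needs no induction and actually proves the stronger statement for the symmetrized difference. One point you should make explicit: the self-improvement step requires $\lVert g\rVert_p<\infty$ a priori (from $\infty\le\infty$ nothing follows), so, exactly as in the paper's proof, you should first reduce to $g$ bounded (and smooth) by truncation/regularization and pass to the limit at the end; with that standard reduction stated, together with the chain rule \eqref{chainrule} for $|g|^{p/2}$ (which is $\mathcal{C}^1$ for $p>2$, the only range where you use it), your proof is complete.
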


Note that in \eqref{moments}, $g$ is not required to have mean $0$.
For the reader's convenience, let us briefly recall the proof.

\begin{proof}
By standard arguments, we may assume $g$ to be $\mathcal{C}^1$-smooth and bounded. Moreover,
by the subadditivity property of the variance functional, the Poincaré-type
inequality for the probability measure $\mu$ on $G$ is extended to the same relation
on $G \times G$, i.\,e.
\begin{equation}
\label{produktpoincare}
\mathrm{Var}_{\mu^2} (u) \le \sigma^2 \iint \lvert \nabla u(x,y) \rvert^2 d\mu(x)d\mu(y)
\end{equation}
for the product measure $\mu^2 = \mu \otimes \mu$. Here, for any
$\mathcal{C}^1$-smooth function $u = u(x,y)$, the modulus of the gradient is given by
$$\lvert \nabla u(x,y) \rvert^2 = \lvert \nabla_x u(x,y) \rvert^2 +
\lvert \nabla_y u(x,y) \rvert^2.$$

Now consider the function
$$u(x,y) = \lvert g(x) - g(y) \rvert^{\frac{p}{2}} \mathrm{sign} (g(x)-g(y)),$$
which is $\mathcal{C}^1$-smooth for $p > 2$ with modulus of gradient
$$\lvert \nabla u(x,y) \rvert = \frac{p}{2} \lvert g(x) - g(y) \rvert^{\frac{p}{2}-1} \sqrt{\lvert \nabla g(x) \rvert^2 + \lvert \nabla g(y) \rvert^2}.$$
Since $u$ has a symmetric distribution under $\mu^2$, applying
\eqref{produktpoincare} together with Hölder's inequality yields
\begin{align*}
&\frac{1}{\sigma^2}\iint \lvert g(x) - g(y) \rvert^p d \mu^2(x,y)\\
&\le \frac{p^2}{4} \iint \lvert g(x) - g(y) \rvert^{p-2}
\big(\lvert \nabla g(x) \rvert^2 + \lvert \nabla g(y) \rvert^2 \big)
d\mu^2(x,y)\\
&\le \frac{p^2}{4} \Big(\iint \lvert g(x) - g(y) \rvert^p d\mu^2(x,y) \Big)^{\frac{p-2}{p}}
\Big(\iint \big(\lvert \nabla g(x) \rvert^2 + \lvert \nabla g(y) \rvert^2 \big)^{\frac{p}{2}} d\mu^2(x,y)\Big)^{\frac{2}{p}}.
\end{align*}
By Jensen's inequality, the last integral may be bounded by
$$2^{\frac{p}{2}-1} \iint (\lvert \nabla g(x) \rvert^p + \lvert \nabla g(y) \rvert^p)d\mu^2(x,y) = 2^{\frac{p}{2}} \int \lvert \nabla g \rvert^p d\mu.$$
Consequently,
$$\Big(\iint \lvert g(x) - g(y) \rvert^p d\mu^2(x,y) \Big)^{\frac{2}{p}} \le \frac{\sigma^2 p^2}{2} \Big(\int \lvert \nabla g \rvert^p d\mu\Big)^{\frac{2}{p}},$$
or, equivalently,
$$\iint \lvert g(x) - g(y) \rvert^p d\mu^2(x,y) \le \Big(\frac{\sigma p}{\sqrt{2}}\Big)^p \int \lvert \nabla g \rvert^p d\mu.$$
In particular, the latter inequality shows that any locally Lipschitz function $g$ such that the right-hand side is finite is integrable (if $g$ is unbounded, we may perform a simple truncation argument).
If $\int g d\mu = 0$, it follows from Jensen's
inequality that the left integral can be bounded below by $\int |g|^p d\mu$, which proves \eqref{momentscentered}.
To see \eqref{moments}, it remains to note that by the triangle inequality,
\begin{equation*}
\Big\lVert g - \int g d\mu \Big\rVert_p \, \ge \,
\lVert g \rVert_p - \Big\lvert\int g d\mu\Big\rvert \, \ge \,
\lVert g \rVert_p - \lVert g \rVert_2.
\end{equation*}
\end{proof}

Combining Lemma \ref{itGradHess} and \eqref{moments}, we are able 
to prove Theorem \ref{kontinuierlich}. Recall that if a relation 
of the form
\begin{equation}
\label{subexp}
\lVert f \rVert_k \le \gamma k \qquad (k \in \mathbb{N})
\end{equation}
holds true with some constant $\gamma > 0$, then $f$ has sub-exponential 
tails, i.\,e. $\int \e^{c|f|} d\mu \linebreak[3]\le 2$ for some constant $c = c(\gamma) > 0$,
e.\,g. $c = \frac{1}{2 \gamma \e}$. Indeed, using $k! \ge (\frac{k}{\e})^k$,
we have
$$
\int \exp(c|f|) d \mu = 1 + \sum_{k=1}^{\infty} c^k \frac{\int |f|^k d\mu}{k!} 
\le 1 + \sum_{k=1}^{\infty} (c \gamma)^k \frac{k^k}{k!}
\le 1 + \sum_{k=1}^{\infty} (c \gamma \e)^k = 2.
$$

\begin{proof}[Proof of Theorem \ref{kontinuierlich}]
Using \eqref{moments} with $f$ replaced by 
$\lvert f^{(k-1)} \rvert_\mathrm{Op}$, $2 \le k \leq d$, we get
\begin{align}
\label{MomenteiteriertdiffA}
\begin{split}
\lVert f^{(k-1)} \rVert_{\mathrm{Op}, p}
 & \le 
\lVert f^{(k-1)} \rVert_{\mathrm{Op}, 2} + 
\frac{\sigma p}{\sqrt{2}}\, \lVert \nabla \lvert f^{(k-1)} \rvert_\mathrm{Op} \rVert_p\\
 & \le 
\lVert f^{(k-1)} \rVert_{\mathrm{Op}, 2} + 
\frac{\sigma p}{\sqrt{2}}\, \lVert f^{(k)} \rVert_{\mathrm{Op}, p},
\end{split}
\end{align}
where  Lemma \ref{itGradHess} was applied on the last step. Consequently, using
\eqref{momentscentered} and then \eqref{MomenteiteriertdiffA} iteratively,
\begin{equation}
\label{pfstep}
\lVert f \rVert_p \
 \le \ \sum_{k=1}^{d-1} \Big(\frac{\sigma p}{\sqrt{2}}\Big)^k\, \lVert f^{(k)} 
\rVert_{\mathrm{Op}, 2}	+ 
\Big(\frac{\sigma p}{\sqrt{2}}\Big)^d\, \lVert f^{(d)} \rVert_{\mathrm{Op}, p}.
\end{equation}
Since 
$\lVert f^{(k)} \rVert_{\mathrm{Op}, 2} \le \sigma^{d-k}$ 
for all $k = 1, \ldots, d-1$ and 
$\lVert f^{(d)} \rVert_{\mathrm{Op}, \infty} \le 1$
by assumption, we obtain
\begin{equation}
\label{MomenteiteriertdiffB}
\lVert f \rVert_p \, \le \, \sigma^{d} \sum_{k=1}^{d} (p/\sqrt{2})^k
 \, \le \,
\frac{1}{1 - (p/\sqrt{2})^{-1}}\, (\sigma p/\sqrt{2})^d \, \le \, 4\, (\sigma p/\sqrt{2})^d
\end{equation}
and therefore
$
\lVert f \rVert_p \le (3 \sigma p)^{d}
$
for all $p \ge 2$. Moreover,
$
\lVert f \rVert_p \le \lVert f \rVert_2 \le (6 \sigma)^d
$
for $p < 2$. It follows that
$$
\lVert \lvert f \rvert^{1/d} \rVert_k = \lVert f \rVert_{k/d}^{1/d} \le \gamma k
$$
for all $k \in \mathbb{N}$, i.\,e. \eqref{subexp} holds with
$\gamma = 6 \sigma$ (and $\lvert f \rvert^{1/d}$ in place of $f$). This yields the assertion of the theorem.
\end{proof}

\begin{proof}[Proof of Theorem \ref{kontinuierlichmAbl}]
Starting as in the proof of Theorem \ref{kontinuierlich}, we arrive at
\begin{align}
\lVert f \rVert_p 
 & \le 
\sum_{k=1}^{d-1} (\sigma p/\sqrt{2})^k\, \lVert f^{(k)} \rVert_{\mathrm{HS}, 2} + 
(\sigma p/\sqrt{2})^d\, \lVert f^{(d)} \rVert_{\mathrm{Op}, p},\label{MomenteiteriertA}
\end{align}
where we used that operator norms are dominated by Hilbert--Schmidt norms.
Moreover, since 
$\int_G \partial_{i_1 \ldots i_k} f\, d\mu = 0$,
by the Poincaré-type inequality,
$$
\int_G (\partial_{i_1 \ldots i_k} f)^2\, d\mu \le 
\sigma^2 \sum_{j=1}^{n} \int_G (\partial_{i_1 \ldots i_k j} f)^2\, d\mu
$$
whenever $1 \le i_1, \ldots, i_k \le n$, $k \leq d-1$. Summing over all 
$1 \le i_1, \ldots, i_k \le n$, we get
\begin{equation}
\label{MomenteiteriertB}
\lVert f^{(k)} \rVert_{\mathrm{HS}, 2}^2 = 
\int_G \lvert f^{(k)} \rvert_\mathrm{HS}^2\, d\mu \le 
\sigma^2 \int_G \lvert f^{(k+1)} \rvert_\mathrm{HS}^2\, d\mu = 
\sigma^2\, \lVert f^{(k+1)} \rVert_{\mathrm{HS}, 2}^2.
\end{equation}
Using \eqref{MomenteiteriertB} in \eqref{MomenteiteriertA} and iterating, 
we thus obtain
\begin{equation*}
\lVert f \rVert_p \le 
\sum_{k=1}^{d-1} \sigma^{d} (p/ \sqrt{2})^k\, \lVert f^{(d)} 
\rVert_{\mathrm{HS}, 2} + 
(\sigma p/ \sqrt{2})^d\, \lVert f^{(d)} \rVert_{\mathrm{Op}, p}.
\end{equation*}
Noting that $\lVert f^{(d)} \rVert_{\mathrm{HS}, 2} \le 1$ and 
$\lVert f^{(d)} \rVert_{\mathrm{Op}, \infty} \le 1$, we arrive at 
\eqref{MomenteiteriertdiffB},
from where we may proceed as in the proof of Theorem \ref{kontinuierlich}.
\end{proof}

\begin{proof}[Proof of Corollary \ref{KorrTails}]
First note that by Chebyshev's inequality, for any $p \ge 1$
\begin{equation}\label{tailsexp}
\mu(|f| \ge \e \lVert f \rVert_p) \le \e^{-p}.
\end{equation}
Moreover, if $p \ge 2$, it follows from \eqref{pfstep} that
$$\e\lVert f \rVert_p  \le \ \e\Big(\sum_{k=1}^{d-1} (\sigma p/\sqrt{2})^k\, \lVert f^{(k)} 
\rVert_{\mathrm{Op}, 2} + 
(\sigma p/\sqrt{2})^d\, \lVert f^{(d)} \rVert_{\mathrm{Op}, \infty}\Big).$$
Assuming $\eta_f(t) \ge 2$, we therefore arrive at
$$\e\lVert f \rVert_{\eta_f(t)} \le \e\big(\sum_{k=1}^{d-1} t + t\big)
= (d\e) t.$$
Hence, applying \eqref{tailsexp} to $p = \eta_f(t)$ (if $p \ge 2$) yields
$$\mu(|f| \ge (d\e) t) \le \mu(|f| \ge \e \lVert f \rVert_{\eta_f(t)})
\le \exp(-\eta_f(t)).$$
Using a trivial estimate provided that $p = \eta_f(t) < 2$, we obtain
$$\mu(|f| \ge (d\e) t) \le \e^2 \exp(-\eta_f(t))$$
for all $t \ge 0$.
The proof now easily follows by rescaling $f$ by $d\e$ and
using that $\eta_{d\e f}(t) \ge \eta_f(t)/(d\e)$.
\end{proof}

In order to prove Proposition \ref{wPItails}, we have to adapt the
first steps of the proof of Theorem \ref{kontinuierlich}. First, we
have the following generalization of Lemma \ref{momentslemma} (in fact, this is a
version of Theorem 4.1 in \cite{B-L}):

\begin{lemma}
	\label{momentslemmawPI}
	Let $\mu$ be a probability measure on an open set $G \subset \mathbb{R}^n$ satisfying a weighted Poincaré-type inequality \eqref{wPI},
	and let $g \colon G \to \mathbb{R}$ be locally Lipschitz with
	$\int_G g d\mu = 0$. Then, for any $p \ge 2$,
	\begin{equation}
	\label{momentscenteredwPI}
	\int_{G} \lvert g \rvert^p d\mu \le \Big(\frac{p}{\sqrt{2}}\Big)^p
	\int_G \lvert \nabla g \rvert^p w^p \,d\mu.
	\end{equation}
	In particular, for any $g \colon G \to \mathbb{R}$ locally Lipschitz,
	\begin{equation}\label{momentswPI}
	\lVert g \rVert_p \, \le \, \lVert g \rVert_2 +
	\frac{p}{\sqrt{2}}\, \lVert w \lvert\nabla g\rvert \rVert_p.
	\end{equation}
\end{lemma}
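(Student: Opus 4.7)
The plan is to mirror the proof of Lemma \ref{momentslemma} step by step, carrying the weight $w$ through each estimate. Throughout I may assume $g$ is $\mathcal{C}^1$-smooth and bounded by a standard truncation/mollification argument.

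First I would tensorize the weighted Poincaré inequality. By subadditivity of the variance functional along product marginals, for any sufficiently smooth $u(x,y)$ on $G \times G$,
\begin{equation*}
\mathrm{Var}_{\mu^2}(u) \le \iint \bigl(|\nabla_x u(x,y)|^2 w(x)^2 + |\nabla_y u(x,y)|^2 w(y)^2\bigr)\, d\mu(x)\,d\mu(y).
\end{equation*}
This is the analogue of \eqref{produktpoincare}, but with a position-dependent weight on each of the two coordinate gradients.

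Next, following \cite{B-G-S}, I would plug in the test function
\begin{equation*}
u(x,y) = |g(x)-g(y)|^{p/2}\,\mathrm{sign}(g(x)-g(y)),
\end{equation*}
which is antisymmetric, hence $\mathbb{E}_{\mu^2} u = 0$, so $\mathrm{Var}_{\mu^2}(u) = \iint |g(x)-g(y)|^p\,d\mu^2$. Computing $|\nabla_x u|^2 = \frac{p^2}{4}|g(x)-g(y)|^{p-2}|\nabla g(x)|^2$ and similarly for $y$, the tensorized weighted Poincaré inequality gives
\begin{equation*}
\iint |g(x)-g(y)|^p\, d\mu^2 \le \frac{p^2}{4}\iint |g(x)-g(y)|^{p-2}\bigl(|\nabla g(x)|^2 w(x)^2 + |\nabla g(y)|^2 w(y)^2\bigr) d\mu^2.
\end{equation*}
Now I would apply Hölder's inequality with exponents $p/(p-2)$ and $p/2$ to separate the $|g(x)-g(y)|^{p-2}$ factor, and then use the convexity inequality $(a+b)^{p/2} \le 2^{p/2-1}(a^{p/2}+b^{p/2})$ to bound
\begin{equation*}
\iint \bigl(|\nabla g(x)|^2 w(x)^2 + |\nabla g(y)|^2 w(y)^2\bigr)^{p/2} d\mu^2 \le 2^{p/2}\int (|\nabla g|\,w)^p\, d\mu.
\end{equation*}
Rearranging the resulting inequality in $\iint |g(x)-g(y)|^p d\mu^2$ yields the product-measure estimate
\begin{equation*}
\iint |g(x)-g(y)|^p\, d\mu^2 \le \Bigl(\frac{p}{\sqrt{2}}\Bigr)^p \int |\nabla g|^p w^p\, d\mu.
\end{equation*}
Finally, since $\int g\,d\mu = 0$, Jensen's inequality gives $\int |g|^p d\mu \le \iint |g(x)-g(y)|^p d\mu^2$, which is \eqref{momentscenteredwPI}. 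To obtain \eqref{momentswPI} for general $g$, apply \eqref{momentscenteredwPI} to $g - \int g\,d\mu$ and use the triangle inequality as at the end of the proof of Lemma \ref{momentslemma}, bounding $|\int g\,d\mu|$ by $\lVert g\rVert_2$.

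The argument is essentially bookkeeping: the only nontrivial point is making sure the weight is distributed correctly in the tensorized inequality, namely $w(x)$ attached to $\nabla_x u$ and $w(y)$ to $\nabla_y u$, so that after the convexity step one recovers the clean bound $\int |\nabla g|^p w^p d\mu$ with no cross-weight terms. There are no new obstacles beyond those present in the proof of Lemma \ref{momentslemma}.
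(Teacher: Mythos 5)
Your proof is correct and follows exactly the route the paper intends: the paper omits the proof of Lemma \ref{momentslemmawPI}, stating only that it ``uses similar arguments as the proof of Lemma \ref{momentslemma}'' (it is a version of Theorem 4.1 in \cite{B-L}), and your argument is precisely that adaptation, with the weight correctly attached to each coordinate gradient in the tensorized inequality and carried through the H\"older and convexity steps to give the constant $(p/\sqrt{2})^p$.
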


The proof of Lemma \ref{momentslemmawPI} uses similar arguments as the
proof of Lemma \ref{momentslemma}, and we therefore omit it.
In particular, by Hölder's inequality, \eqref{momentswPI} implies
	\begin{equation}\label{momentswPIH}
\lVert g \rVert_p \, \le \, \lVert g \rVert_2 +
\frac{p}{\sqrt{2}}\, \lVert w \rVert_{2p} \lVert \nabla g \rVert_{2p}.
\end{equation}
Starting with \eqref{momentscenteredwPI}--\eqref{momentswPIH} and iterating as in
\eqref{MomenteiteriertdiffA} and \eqref{pfstep}, we obtain
$$\lVert f \rVert_p
\ \le \
\sum_{k=1}^{d-1} 2^{\binom{k}{2}} \Big(\frac{p \lVert w \rVert_{2^kp}}{\sqrt{2}}\Big)^k\, \lVert f^{(k)} 
\rVert_{\mathrm{Op}, 2}	+ 2^{\binom{d}{2}}
\Big(\frac{p \lVert w \rVert_{2^{d-1}p}}{\sqrt{2}}\Big)^d\, \lVert w \lvert f^{(d)}\rvert_{\mathrm{Op}} \rVert_{2^{d-1}p},$$
hence we easily arrive at the conclusions of Proposition \ref{wPItails}. Again, we
omit the details.

Finally, the proof of Corollary \ref{CorrExpTails} is similar to the proof of
Corollary 4.2 in \cite{B-L}.

\begin{proof}[Proof of Corollary \ref{CorrExpTails}]
First let $2 \le q \le p$. Using the assumptions and Proposition \ref{wPItails}, we arrive at
$$\lVert f \rVert_q \, \le \, \sum_{k=1}^{d-1} (2^{\frac{k-2}{2}}q C)^k +
(2^{\frac{d-2}{2}}q C)^d$$
and hence
$$\lVert f \rVert_q \le 4 \, (2^{\frac{d-1}{2}} Cq)^d \le
(2^{\frac{d+3}{2}} Cq)^d$$
(this follows as in \eqref{MomenteiteriertdiffB}, substituting $\sigma$ by
$2^{\frac{d-1}{2}}C \ge 1$). Moreover, if $0 < q \le 2$, we have
$$\lVert f \rVert_q \le \lVert f \rVert_2 \le (2^{\frac{d+5}{2}} C)^d.$$
Since the function $q \mapsto \e^{d/\e} q^{dq}$, $q > 0$, is minimized at $q = 1/\e$
with minimum value $1$, it follows that $\mathbb{E}|f|^q \le \e^{d/\e}\,
(2^{\frac{d+5}{2}} C q)^{dq}$ for all $0 < q \le p$. Therefore, for any $t > 0$ and
any $0 < q \le p$,
\begin{equation}\label{genbd}
\mu(|f| \ge t) \le \frac{\mathbb{E}|f|^q}{t^q} \le
\e^{d/\e}\, \bigg(\frac{(2^{\frac{d+5}{2}} C q)^d}{t}\bigg)^q.
\end{equation}
Now set $s = t^{1/d}/(2^{\frac{d+5}{2}} C)$ and write $\mu(|f| \ge t) \le \e^{d/\e}
\e^{-\varphi(q)}$ with $\varphi(q) = dq(\log(s) - \log(q))$. It is easy to check
that $\varphi$ is a concave function on $(0,\infty)$ which attains its maximum at
$q_0 = s/\e$ with $\varphi(q_0) = ds/\e = dt^{1/d}/(2^{\frac{d+5}{2}} C\e)$. Noting
that $q_0 \le p$ is equivalent with $t \le (2^{\frac{d+5}{2}} C \e p)^d$ completes
the proof.
\end{proof}

\section{Applications}

Let $X_1, \ldots, X_n$ be independent random variables with
distributions satisfying a Poincaré-type inequality \eqref{PI1} with
common constant $\sigma^2 > 0$.
For real numbers $a_{i_1 \ldots i_d}$, $i_1 < \ldots < i_d$, consider
the function
\begin{equation}\label{multPol}
f(X_1, \ldots, X_n) := \sum_{i_1 < \ldots < i_d} a_{i_1 \ldots i_d} X_{i_1} \cdots X_{i_d},
\end{equation}
which is a homogeneous multilinear polynomial of order $d$. For any $i_1 < \ldots <
i_d$ and any permutation $\sigma \in S^d$, set $a_{\sigma(i_1) \ldots \sigma(i_d)}
\equiv a_{i_1 \ldots i_d}$. Moreover, set $a_{i_1 \ldots i_d} = 0$ whenever the
indexes $i_1, \ldots, i_d$ are not pairwise different. This gives rise to a
hypermatrix $A = (a_{i_1 \ldots i_d}) \in \mathbb{R}^{n^d}$, whose Euclidean norm
we denote by $\lVert A \rVert_{\mathrm{HS}}$. Moreover, set $\lVert A \rVert_\infty
:= \max_{i_1 < \ldots < i_d} \lvert a_{i_1 \ldots i_d}\rvert$.

As a first example, we may apply our results to functions of type
\eqref{multPol}. Here it is convenient to assume for the random variables $X_i$ to
have mean zero:

\begin{proposition}
\label{multPolProp}
Let $X_1, \ldots, X_n$ be independent random variables with distributions satisfying
a Poincaré-type inequality \eqref{PI1} with common constant $\sigma^2 > 0$. Assume
$\mathbb{E} X_i = 0$ for all $i = 1, \ldots, n$. Let $d \in
\mathbb{N}$, and consider a function $f$ of type \eqref{multPol}. Then,
$$\mathbb{E} \exp \Big(\frac{c}{\sigma \lVert A \rVert_{\mathrm{HS}}^{1/d}} \lvert f
\rvert^{1/d} \Big) \le 2.$$
Here, $\mathbb{E}$ denotes the expectation with respect to the random variables
$X_1, \ldots,\linebreak[2] X_n$, and $c$ is the absolute constant appearing in Theorem
\ref{kontinuierlichmAbl}. In particular,
$$\mathbb{E} \exp \Big(\frac{c}{\sigma n^{1/2} \lVert A \rVert_{\infty}^{1/d}}
\lvert f \rvert^{1/d} \Big) \le 2.$$
Moreover, if $\mathbb{E}X_i^2 = 1$ for all $i = 1, \ldots, n$,
\begin{align*}
\mathbb{P} (\lvert f - \mathbb{E} f\rvert \ge t) &\le \e^2 \exp \Big(-
\frac{\sqrt{2}}{\sigma d\e}
\min\Big(\frac{t}{\lVert A \rVert_\mathrm{HS}}, \frac{t^{1/d}}
{\lVert A \rVert_\mathrm{HS}^{1/d}}\Big)\Big)\\
&\le \e^2 \exp \Big(- \frac{\sqrt{2}}{\sigma d\e}\min\Big(\frac{t}{n^{d/2}
\lVert A\rVert_\infty}, \frac{t^{1/d}}{n^{1/2}\lVert A \rVert_\infty^{1/d}}\Big)\Big).
\end{align*}
\end{proposition}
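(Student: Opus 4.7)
The plan is to apply Theorem \ref{kontinuierlichmAbl} (for the two exponential bounds) and Corollary \ref{KorrTails} (for the tail bound) to the polynomial $f$ with respect to the product measure $\mu := \mu_1 \otimes \cdots \otimes \mu_n$ on $\mathbb{R}^n$, where $\mu_i$ is the law of $X_i$. By the standard tensorization property, $\mu$ satisfies a Poincar\'e-type inequality with the same constant $\sigma^2$, so these results apply to the (entire, open) underlying space.

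For the first exponential bound I would verify the hypotheses of Theorem \ref{kontinuierlichmAbl} for the normalized function $\tilde{f} := f/\lVert A \rVert_\mathrm{HS}$. Since $a_{i_1 \ldots i_d} = 0$ whenever two indices coincide, every monomial of $\tilde{f}$ is a product of distinct, independent, mean-zero factors, so $\int \tilde{f}\, d\mu = 0$. By the same reasoning, each partial derivative $\partial_{i_1 \ldots i_k} \tilde{f}$ with $1 \le k \le d-1$ is, by multilinearity, again a multilinear polynomial in the variables outside $\{i_1, \ldots, i_k\}$ whose coefficient tensor is a ``slice'' of $A$, and therefore integrates to zero as well. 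Differentiating $d$ times produces the constant symmetric hypermatrix $\tilde{f}^{(d)} \equiv A/\lVert A \rVert_\mathrm{HS}$, whence $\lVert \tilde{f}^{(d)} \rVert_{\mathrm{HS}, 2} = 1$ and $\lVert \tilde{f}^{(d)} \rVert_{\mathrm{Op}, \infty} \le 1$ (the operator norm being dominated by the HS norm). Theorem \ref{kontinuierlichmAbl} then yields the first exponential inequality, and the second follows from $\lVert A \rVert_\mathrm{HS}^2 = d! \sum_{i_1 < \ldots < i_d} a_{i_1 \ldots i_d}^2 \le n(n-1)\cdots(n-d+1)\,\lVert A \rVert_\infty^2 \le n^d \lVert A \rVert_\infty^2$, since replacing $\lVert A \rVert_\mathrm{HS}$ by the larger $n^{d/2} \lVert A \rVert_\infty$ only weakens the exponent.

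For the tail bound I would invoke Corollary \ref{KorrTails}; note that $\mathbb{E}f = 0$ under $\mathbb{E}X_i = 0$, so $|f - \mathbb{E}f| = |f|$. The corollary requires bounds on $\lVert f^{(k)} \rVert_{\mathrm{Op}, 2}$ for $k = 1, \ldots, d-1$ and on $\lVert f^{(d)} \rVert_{\mathrm{Op}, \infty}$. Starting from the explicit formula $\partial_{j_1 \ldots j_k} f(x) = \frac{1}{(d-k)!} \sum_{m_1, \ldots, m_{d-k}} a_{j_1 \ldots j_k m_1 \ldots m_{d-k}}\, x_{m_1} \cdots x_{m_{d-k}}$ (with the symmetrized coefficients, vanishing on non-distinct indices), I would use $|f^{(k)}|_\mathrm{Op} \le |f^{(k)}|_\mathrm{HS}$ and compute the second moment of the HS norm directly. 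With $\mathbb{E}X_i = 0$ and $\mathbb{E}X_i^2 = 1$ and by independence, only diagonal terms survive: the products $X_{m_1}\cdots X_{m_{d-k}} X_{m'_1} \cdots X_{m'_{d-k}}$ have non-zero expectation exactly when $\{m_1, \ldots, m_{d-k}\} = \{m'_1, \ldots, m'_{d-k}\}$ as sets (and disjoint from $\{j_1, \ldots, j_k\}$), whence the combinatorial multiplicities collapse to $\lVert f^{(k)} \rVert_{\mathrm{HS}, 2}^2 = \lVert A \rVert_\mathrm{HS}^2/(d-k)!$. In particular, $\lVert f^{(k)} \rVert_{\mathrm{Op}, 2} \le \lVert A \rVert_\mathrm{HS}$ for all $1 \le k \le d-1$, and $\lVert f^{(d)} \rVert_{\mathrm{Op}, \infty} \le \lVert A \rVert_\mathrm{HS}$, so $\eta_f(t) \ge \frac{\sqrt{2}}{\sigma} \min\bigl(t/\lVert A \rVert_\mathrm{HS},\, t^{1/d}/\lVert A \rVert_\mathrm{HS}^{1/d}\bigr)$, since the minimum over $k$ of $t^{1/k}/\lVert A \rVert_\mathrm{HS}^{1/k}$ is attained at $k = d$ when $t \ge \lVert A \rVert_\mathrm{HS}$ and at $k = 1$ otherwise. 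Corollary \ref{KorrTails} now yields the first tail bound, and the second follows once more from $\lVert A \rVert_\mathrm{HS} \le n^{d/2} \lVert A \rVert_\infty$.

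The main obstacle I foresee is the bookkeeping in the second-moment computation of $\lVert f^{(k)} \rVert_{\mathrm{HS}, 2}^2$: one has to track simultaneously the number of ordered $k$-tuples representing a given $k$-element set of differentiation indices, the symmetries among the remaining $(d-k)$ indices inside each $a_{\cdot}$, and the count of $(d-k)$-subsets of a given $d$-element index set, and then check that these multiplicities cancel down to the clean factor $1/(d-k)!$. Apart from this, both conclusions reduce to direct applications of the previously established Theorem \ref{kontinuierlichmAbl} and Corollary \ref{KorrTails}.
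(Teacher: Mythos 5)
Your proposal is correct and follows exactly the route the paper intends: the paper proves Proposition \ref{multPolProp} simply by invoking Theorem \ref{kontinuierlichmAbl} (applied, as you do, to $f/\lVert A\rVert_{\mathrm{HS}}$ under the tensorized Poincar\'e inequality, using that $f^{(d)}\equiv A$ and all lower-order derivatives are centered) and Corollary \ref{KorrTails} (with the bounds $\lVert f^{(k)}\rVert_{\mathrm{Op},2}\le\lVert A\rVert_{\mathrm{HS}}$, which your second-moment computation, including the factor $1/(d-k)!$, establishes correctly). Your verification of the norm estimates and the reduction $\lVert A\rVert_{\mathrm{HS}}\le n^{d/2}\lVert A\rVert_\infty$ supplies precisely the details the paper leaves to the reader.
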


Proposition \ref{multPolProp} follows immediately from Theorem
\ref{kontinuierlichmAbl} and Corollary \ref{KorrTails}.
Note that for non-centered random variables $X_1, \ldots, X_n$, applying Proposition
\ref{multPolProp} to the random variables $X_i - \mathbb{E} X_i$ means removing
certain ``lower order'' terms in \eqref{multPol}, which is in accordance with the
ideas sketched in the introduction.

We may furthermore apply our results in the context of random matrix theory. Here we
extend an example on second order concentration bounds for linear eigenvalue
statistics in presence of a logarithmic Sobolev inequality \cite{G-S}, Proposition
1.10, to the situation where only a Poincaré-type inequality is available.

Indeed, let $\{\xi_{jk}, 1 \le j \le k \le N \}$ be a family of independent random
variables on some probability space. Assume that the distributions of the
$\xi_{jk}$'s all satisfy a (one-dimensional) Poincaré-type inequality \eqref{PI1}
with common constant $\sigma^2$. Put $\xi_{jk} = \xi_{kj}$ for $1 \le k < j \le N$
and consider a symmetric $N \times N$ random matrix $\Xi =
(\xi_{jk}/\sqrt{N})_{1 \le j, k \le N}$ and denote by $\mu^{(N)}$ the joint
distribution of its ordered eigenvalues $\lambda_1 \le \ldots \le \lambda_N$ on
$\mathbb{R}^N$ (in fact, $\lambda_1 < \ldots < \lambda_N$ a.s.). Recall that by a
simple argument using the Hoffman--Wielandt theorem, $\mu^{(N)}$ satisfies a
Poincaré-type inequality with constant
\begin{equation}
\label{PC}
\sigma_N^2 = \frac{2 \sigma^2}{N}
\end{equation}
(see for instance S.\,G. Bobkov and F. G\"{o}tze \cite{B-G}). Note that similar
observations also hold for Hermitian random matrices.

Considering the probability space $(\mathbb{R}^N, \mathbb{B}^N, \mu^{(N)})$, if
$f \colon \mathbb{R} \to \mathbb{R}$ is a $\mathcal{C}^1$-smooth function, it is
well-known that asymptotic normality
\begin{equation}
\label{S_N beta}
S_N = \sum_{j=1}^{N} (f(\lambda_j) - \mathbb{E}f(\lambda_j)) \Rightarrow \mathcal{N}(0, \sigma_f^2)
\end{equation}
holds for the self-normalized linear eigenvalue statistics $S_N$. Here,
``$\Rightarrow$'' denotes weak convergence, $\mathbb{E}$ means taking the expectation
with respect to $\mu^{(N)}$ and $\mathcal{N}(0, \sigma_f^2)$ denotes a normal
distribution with mean zero and variance $\sigma_f^2$ depending on $f$. This result
was established by K. Johansson \cite{J} for the case of $\beta$-ensembles and, for
general Wigner matrices, by A.\,M. Khorunzhy, B.\,A. Khoruzhenko and L.\,A. Pastur
\cite{K-K-P} as well as Ya. Sinai and A. Soshnikov \cite{S-S}. Concentration of
measure results have been studied by A. Guionnet and O. Zeitouni \cite{G-Z}, in
particular proving fluctuations of order $\mathcal{O}_\mathbb{P}(1)$. Our results
yield a second order concentration bound:

\begin{proposition}
\label{secondorderWigner}
Let $\mu^{(N)}$ be the joint distribution of the ordered eigenvalues of $\Xi$. Let
$f \colon \mathbb{R} \to \mathbb{R}$ be a $\mathcal{C}^2$-smooth function with
$f'(\lambda_j) \in L^1(\mu^{(N)})$ and bounded second derivatives, and let
$$\tilde{S}_N := S_N - \sum_{j=1}^{N} (\lambda_j - \mathbb{E}(\lambda_j)) \mathbb{E}
f'(\lambda_j)$$
with $S_N$ as in \eqref{S_N beta}. Then, we have
$$\mathbb{E} \exp\Big(\frac{cN^{1/4}}{\sqrt{2}\sigma\lVert f''\rVert_\infty^{1/2}}
\lvert\tilde{S}_N\rvert^{1/2}\Big) \le 2,$$
where $c > 0$ is the absolute constant from Theorem \ref{kontinuierlichmAbl}.
\end{proposition}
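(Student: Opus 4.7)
The plan is to apply Theorem \ref{kontinuierlichmAbl} with $d=2$ to the function $F \colon \mathbb{R}^N \to \mathbb{R}$ defined by
$$F(\lambda_1, \ldots, \lambda_N) := \sum_{j=1}^N f(\lambda_j) - \sum_{j=1}^N \mathbb{E} f(\lambda_j) - \sum_{j=1}^N (\lambda_j - \mathbb{E}\lambda_j)\,\mathbb{E}f'(\lambda_j),$$
so that $F = \tilde S_N$ $\mu^{(N)}$-a.s. The ambient measure $\mu^{(N)}$ satisfies a Poincaré-type inequality with constant $\sigma_N^2 = 2\sigma^2/N$, as recalled in \eqref{PC}, and $F$ is $\mathcal{C}^2$-smooth on $\mathbb{R}^N$ since $f$ is.

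The first step is to verify the centering hypotheses of Theorem \ref{kontinuierlichmAbl}. The identity $\int F \, d\mu^{(N)} = 0$ is immediate from the definition. A direct computation gives $\partial_i F = f'(\lambda_i) - \mathbb{E}f'(\lambda_i)$, so $\int \partial_i F \, d\mu^{(N)} = 0$ for every $i$; this is exactly the vanishing-mean condition on first-order derivatives required by the theorem, and it is precisely why the linear correction $\sum_j(\lambda_j - \mathbb{E}\lambda_j)\mathbb{E}f'(\lambda_j)$ is subtracted in the definition of $\tilde S_N$. Next, since this correction is linear in $\lambda$, it drops out in second order and one finds $\partial_i \partial_j F = \delta_{ij} f''(\lambda_i)$, i.e.\ the Hessian is diagonal. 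Consequently
$$|F^{(2)}|_\mathrm{HS}^2 = \sum_{i=1}^N f''(\lambda_i)^2 \le N\lVert f''\rVert_\infty^2, \qquad |F^{(2)}|_\mathrm{Op} = \max_i |f''(\lambda_i)| \le \lVert f''\rVert_\infty,$$
so $\lVert F^{(2)}\rVert_{\mathrm{HS},2} \le \sqrt{N}\lVert f''\rVert_\infty$ and $\lVert F^{(2)}\rVert_{\mathrm{Op},\infty} \le \lVert f''\rVert_\infty$.

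To match the normalization of Theorem \ref{kontinuierlichmAbl}, I would rescale by setting $\tilde F := F/(\sqrt{N}\lVert f''\rVert_\infty)$, which yields $\lVert \tilde F^{(2)}\rVert_{\mathrm{HS},2} \le 1$ and $\lVert \tilde F^{(2)}\rVert_{\mathrm{Op},\infty} \le 1/\sqrt{N} \le 1$. Applying the theorem then gives $\int \exp((c/\sigma_N)|\tilde F|^{1/2}) \, d\mu^{(N)} \le 2$. Inserting $\sigma_N = \sigma\sqrt{2/N}$ and unfolding the normalization via $\tilde F = \tilde S_N/(\sqrt{N}\lVert f''\rVert_\infty)$, the coefficient in the exponent becomes
$$\frac{c}{\sigma_N}\cdot\frac{1}{N^{1/4}\lVert f''\rVert_\infty^{1/2}} = \frac{cN^{1/4}}{\sqrt{2}\,\sigma\,\lVert f''\rVert_\infty^{1/2}},$$
which is exactly the rate claimed in the proposition.

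There is no serious obstacle in this argument — it is essentially a specialization of Theorem \ref{kontinuierlichmAbl}. The point worth emphasizing is that the improved exponent $N^{1/4}$ (as opposed to what one would obtain by a direct application of Theorem \ref{kontinuierlich}) arises from the \emph{diagonal} structure of the Hessian: summing $N$ squared diagonal entries keeps $\lVert F^{(2)}\rVert_{\mathrm{HS},2}$ at size $\sqrt{N}\lVert f''\rVert_\infty$ rather than $N\lVert f''\rVert_\infty$, and this combines with the $N^{-1/2}$ scaling of $\sigma_N$ through the $1/2$-power on the left-hand side to give the stated rate.
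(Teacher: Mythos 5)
Your proposal is correct and follows exactly the route the paper intends: the paper merely remarks that $\tilde S_N$ is ``centered'' in the sense of Theorem \ref{kontinuierlichmAbl} and that the claim follows from elementary calculus together with \eqref{PC}, and your verification of the centering of $F$ and $\partial_i F$, the diagonal Hessian bounds $\lVert F^{(2)}\rVert_{\mathrm{HS},2}\le \sqrt{N}\lVert f''\rVert_\infty$, $\lVert F^{(2)}\rVert_{\mathrm{Op},\infty}\le\lVert f''\rVert_\infty$, and the rescaling with $\sigma_N=\sigma\sqrt{2/N}$ is precisely that calculation. The constant comes out as stated, so nothing further is needed.
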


Since $\tilde{S}_N$ is ``centered'' in the sense of Theorem \ref{kontinuierlichmAbl},
Proposition \ref{secondorderWigner} immediately follows from elementary calculus,
using \eqref{PC}. Note that in view of the self-normalizing property of $S_N$, the
fluctuation result for $\tilde{S}_N$ is of the next order, although the scaling is of
order $\sqrt{N}$ only. Comparing Proposition \ref{secondorderWigner} to \cite{G-S},
Proposition 1.10, we see that we essentially arrive at the same result though for
$|\tilde{S}_N|^{1/2}$ instead of $|\tilde{S}_N|$ due to the assumption of a
Poincaré-type inequality.

Using Corollary \ref{KorrTails}, we can in fact slightly sharpen the results on the
tail behavior of $S_N$. Indeed, an easy calculation yields
$$\mu_N(\lvert S_N\rvert \ge t)
\le \ \e^2 \exp \Big(- \frac{1}{\sigma d\e}
\min \Big(\frac{tN^{1/2}}{(\int \sum_i (f'(\lambda_i))^2 d\mu_N)^{1/2}},
\frac{t^{1/2}N^{1/4}}{\lVert f''\rVert_\infty^{1/2}} \Big) \Big)$$
for any $t \ge 0$. Similar results may be obtained for higher orders $d \ge 3$.

%
%



\begin{thebibliography}{99}	
	
	\bibitem[A]{A} Adamczak, R.: 
	\textit{Moment inequalities for $U$-statistics}. 
	The Annals of Probability 34(6) (2006), 2288--2314.
	
	\bibitem[A-B-W]{A-B-W} Adamczak, R., Bednorz, W., Wolff, P.:
	\textit{Moment estimates implied by modified log-Sobolev inequalities}.
	ESAIM Probab. Stat. 21 (2017), 467--494.

\bibitem[A-W]{A-W} Adamczak, R., Wolff, P.: 
\textit{Concentration inequalities for non-Lipschitz functions with bounded 
derivatives of higher order}. 
Probability Theory Related Fields 162(3) (2015), 531--586.

\bibitem[B-C-G]{B-C-G} Bobkov, S.\,G., Chistyakov, G.\,P., Götze, F.: 
\textit{Second Order Concentration on the Sphere}. 
Commun. Contemp. Math. 19(5) (2017), 1650058, 20 pp.

\bibitem[B-G]{B-G} Bobkov, S.\,G., G\"{o}tze, F.: \textit{Concentration of empirical distribution functions with applications to non-i.i.d. models}. Bernoulli 16(4) (2010), 1385--1414.

\bibitem[B-G-S]{B-G-S} Bobkov, S.\,G., Götze, F., Sambale, H.: 
\textit{Higher order concentration of measure}. 
Commun. Contemp. Math 21(3) (2019), 1850043, 36 pp.

\bibitem[B-L]{B-L} Bobkov, S.\,G., Ledoux, M.:
\textit{Weighted Poincar\'{e}-type inequalities for Cauchy and other convex measures}.
The Annals of Probability 37(2) (2009), 403--427.

\bibitem[B-L-M]{B-L-M} Boucheron, S., Lugosi, G., Massart, P.: 
\textit{Concentration Inequalities. A Nonasymptotic Theory of Independence}. 
Oxford University Press, 2013.

\bibitem[CE-F-M]{CE-F-M} Cordero-Erausquin, D., Fradelizi, M., Maurey, B.: 
\textit{The (B) conjecture for the Gaussian measure of dilates of symmetric 
convex sets and related problems}. J. Funct. Anal. 214(2) (2004), 410--427.

\bibitem[G-S]{G-S} Götze, F., Sambale, H.: 
\textit{Second order concentration via logarithmic Sobolev inequalities}. 
Preprint, arXiv:1605.08635, to appear in: Bernoulli.

\bibitem[G-Z]{G-Z} Guionnet, A., Zeitouni, O.: \textit{Concentration of the spectral measure for large matrices}. Electron. Commun. Prob. 5 (2000), 119--136.

\bibitem[J]{J} Johansson, K.: \textit{On fluctuations of eigenvalues of random Hermitian matrices}. Duke Math. J. 91(1) (1998), 151--204.

\bibitem[K-K-P]{K-K-P} Khorunzhy, A.\,M., Khoruzhenko, B.\,A., Pastur, L.\,A.: \textit{Asymptotic properties of large random matrices with independent entries}. J. Math. Phys. 37 (1996), 5033-5060.

\bibitem[L]{L} Ledoux, M.: 
\textit{The Concentration of Measure Phenomenon}. 
American Mathematical Society, 2001.

\bibitem[S-S]{S-S} Sinai, Ya., Soshnikov, A.: \textit{A central limit theorem for traces of large random matrices with independent matrix elements}. Bol. Soc. Brasil. Mat (N.S.) 29 (1998), 1--24.

\bibitem[W]{W} Wolff, P.: 
\textit{On some Gaussian concentration inequality for non-Lipschitz functions}. High Dimensional Probability VI, Progress in Probability 66 (2013), 103--110.



\end{thebibliography}
\end{document}